\documentclass{article} 
\usepackage{geometry}
\usepackage{amsmath}
\usepackage{amssymb}
\usepackage{amsthm}
\usepackage{graphicx}
\usepackage{subcaption}
\usepackage{float}
\usepackage{multirow}
\usepackage[normalem]{ulem}
\usepackage[utf8]{inputenc}
\usepackage{csquotes}
\usepackage[english]{babel}
\usepackage{setspace}
\usepackage{algorithm}
\usepackage{enumitem}
\usepackage{mathpazo}
\usepackage{mathtools}
\usepackage{bbm}
\usepackage{booktabs}
\usepackage{centernot}

\usepackage{cancel}

\usepackage{tikz}
\usetikzlibrary{shapes,decorations.pathreplacing,arrows,calc,arrows.meta,fit,positioning}
\tikzset{
    -Latex,auto,node distance =1 cm and 1 cm,semithick,
    state/.style ={circle, draw, minimum width = 1 cm, inner sep=0pt},
    point/.style = {circle, draw, inner sep=0.04cm,fill,node contents={}},
    bidirected/.style={Latex-Latex,dashed},
    el/.style = {inner sep=2pt, align=left, sloped}
}

\useunder{\uline}{\ul}{}
\geometry{a4paper,top=1.0in, bottom=1.0in, left=1.0in, right=1.0in}
\linespread{1.2}
\usepackage[authoryear,round,sort&compress]{natbib}
\setlength{\bibsep}{0.5em}
\usepackage[colorlinks=true, linkcolor=blue]{hyperref}

\newtheorem{theorem}{Theorem}[section]
\newtheorem{lemma}[theorem]{Lemma}
\newtheorem{corollary}[theorem]{Corollary}
\newtheorem{proposition}[theorem]{Proposition}

\newtheorem*{theorem*}{Theorem}
\newtheorem*{lemma*}{Lemma}
\newtheorem*{corollary*}{Corollary}
\newtheorem*{proposition*}{Proposition}
\newtheorem*{conjecture*}{Conjecture}

\theoremstyle{definition}
\newtheorem{definition}{Definition}
\newtheorem*{definition*}{Definition}
\theoremstyle{definition}
\newtheorem{example}{Example}
\theoremstyle{definition}
\newtheorem*{example*}{Example}
\theoremstyle{definition}

\theoremstyle{definition}
\newtheorem*{assumption*}{Assumption}
\theoremstyle{definition}

\theoremstyle{remark}

\theoremstyle{remark}
\newtheorem*{remark*}{Remark}

\DeclareMathOperator{\cov}{cov}
\DeclareMathOperator{\var}{var}
\DeclareMathOperator*{\argmax}{arg\,max}

\newcommand{\E}{\mathbb{E}}
\newcommand{\prob}{\mathbb{P}}
\newcommand{\st}{\text{ s.t. }}
\newcommand{\AND}{\text{ and }}
\newcommand{\given}{\,|\,}
\newcommand{\indep}{\perp \!\!\! \perp }
\newcommand{\T}{^\top}

\DeclareMathOperator{\pa}{pa}

\DeclareMathOperator{\sk}{sk}
\newcommand{\mec}{\overline{G}}
\newcommand{\pltr}{\mathcal{T}}
\newcommand{\dpdm}{m}
\newcommand{\dd}{d}
\newcommand{\ciclass}{\mathcal{C}}
\newcommand{\pfclass}{\mathcal{F}}

\newcommand{\hone}{\mathcal{H}_1}
\newcommand{\hzero}{\mathcal{H}_0}

\usepackage{authblk}

\begin{document}

\title{Optimal structure learning \\ and conditional independence testing}

\author[]{Ming Gao}
\author[]{Yuhao Wang}
\author[]{Bryon Aragam}
\affil[]{\emph{University of Chicago and Amazon}}

\date{}

\maketitle

{\let\thefootnote\relax\footnote{Contact: \texttt{minggao@chicagobooth.edu}, \texttt{wanyuhao@amazon.co.jp}, \texttt{bryon@chicagobooth.edu}}}

\begin{abstract}
    We establish a fundamental connection between optimal structure learning and optimal conditional independence testing by showing that the minimax optimal rate for structure learning problems is determined by the minimax rate for conditional independence testing in these problems. 
    This is accomplished by establishing a general reduction between these two problems in the case of poly-forests, and demonstrated by deriving optimal rates for several examples, including Bernoulli, Gaussian and nonparametric models. 
    Furthermore, we show that the optimal algorithm in these settings is a suitable modification of the PC algorithm. 
    This theoretical finding provides a unified framework for analyzing the statistical complexity of structure learning through the lens of minimax testing.
\end{abstract}

\section{Introduction}\label{sec:intro}
Graphical models are important tools in machine learning for representing complex dependency structures that arise in a wide range of applications in causality, artificial intelligence, and statistics \citep{spirtes2000causation, pearl2010causal,murphy2012machine,zhang2013integrated}.
A crucial preliminary step when using graphical models is \emph{structure learning}, which seeks to recover the underlying graph from data. The accuracy of structure learning directly impacts the performance of downstream tasks.
It is well-known that structure learning is closely related to conditional independence (CI) testing, which itself is a fundamental topic that has attracted significant attention in recent years.
Owing to its foundational role, CI testing has been extensively studied both methodologically and theoretically, leading to a rich body of literature on its statistical properties \citep{zhang2011kernel, shah2020hardness, canonne2018testing, neykov2021minimax}. 
In particular, structure learning methods often rely on CI testing as a subroutine for inferring graphical dependencies \citep{spirtes1991algorithm,maathuis2018handbook}.

The relationship between CI testing and structure learning has long been understood from the algorithmic perspective, which takes CI tests as black-box oracles and often assumes perfect CI information to guarantee graph recovery. 
However, the relationship between their finite-sample complexity and statistical hardness has yet to be formalized.
Specifically, given that graphical models are intrinsically defined by the Markov property, a deeper understanding of how the statistical difficulty of CI testing governs the learning of graphical structures is both natural and fundamental.

In undirected graphical models (Markov random fields), the information-theoretic limits of structure learning---specifically, the minimum number of samples required for reliable graph recovery---have been widely investigated ~\citep{wang2010information,drton2017structure,misra2020information}.
For directed acyclic graphs (DAGs), however, much of the optimality literature has focused on the Gaussian setting, where the analysis exploits strong distributional properties of the Gaussian. As a result, existing optimality results do not readily extend to discrete or nonparametric settings. 
By contrast, CI testing in its own right has been studied more broadly, with well-established minimax results across various distributional settings \citep{canonne2018testing,neykov2021minimax}.
Moreover, existing optimality results do not establish a general (information-theoretic) reduction between structure learning and CI testing, which would be useful both in its own right, as well as for future investigations into the hardness of structure learning broadly.
These gaps suggest an opportunity to leverage statistical results in CI testing to better understand the sample complexity of structure learning for DAGs in general.

In this paper, we establish a general reduction between the minimax optimality of structure learning in DAG models and the minimax optimality of CI testing.
While there is no such reduction even for Gaussian models, existing optimality results \citep{wang2024optimal,daskalakis2025learning} implicitly exploit this connection.
In addition to establishing this reduction in a general setting that allows us to analyze non-Gaussian models, we also show how existing Gaussian results can be deduced as a special case.
We focus on poly-forests, a tractable yet rich subclass of DAGs that captures structured dependencies \citep{chow1968approximating,dasgupta1999polytree}, serving as a principled starting point toward fully general DAGs.
Although optimality for learning general DAGs has been studied \citep{gao2022optimal}, specific distributional assumptions are needed in the analysis (e.g. linear SEM with equal error variances).
We develop results under generic conditions that apply to both parametric and nonparametric distributions. 
Our work provides a framework connecting the statistical complexity of CI testing with that of structure learning, offering new insights and understanding of sample-efficient DAG recovery in diverse settings.

\subsection{Contributions}
Our contributions in this work are threefold:
\begin{enumerate}
    \item We establish a connection (Theorem~\ref{thm:main}) between the statistical complexity of conditional independence testing and structure learning problems.
    We show that the minimax optimal sample complexity of learning any poly-forest is
    \begin{align}
    \label{eq:main}
        n\asymp \frac{\log d}{c^\alpha}\,,
    \end{align}
    when the minimax optimal testing radius of the corresponding CI testing problem is $n^{-1/\alpha}$ for some $\alpha>0$ depending on the modeling setup, and $\dd$ is the number of nodes, $c$ is a parameter that captures the signal strength (cf.~Section~\ref{sec:prelim}).
    
    \item We apply this general result to derive the minimax optimal sample complexity for several practical examples, including Bernoulli, Gaussian, and nonparametric continuous distributions, and discuss their differences. 
    We also show that the optimal sample complexity is achieved by an efficient algorithm based on the classical PC algorithm. 

    \item We conduct experiments to verify our theoretical findings for structure learning using Bernoulli, Gaussian, and nonparametric continuous distributions. These empirical results confirm that a powerful CI test integrated into the PC-type algorithm leads to consistent and accurate structure recovery.
\end{enumerate}
While it may not come as a surprise that there is \emph{some} connection between CI testing structure learning, the profound simplicity of \eqref{eq:main} merits some pause. The reduction depends only on the dimension $d$, the signal $c$, and the minimax exponent $\alpha$ of CI testing. Moreover, this dependence is both simple and mild (i.e. only logarithmic in $d$), and applies to any model for which the minimax CI testing radius can be written down.

\subsection{Related work}
\paragraph{Conditional independence testing}
CI testing forms a crucial building block in machine learning and reasoning tasks. Given a triplet of random variables $(X,Y,Z)$, the problem asks whether $X$ is conditionally independent of $Y$ given $Z$. Numerous methods \citep[e.g.][]{dawid1979conditional, fukumizu2007kernel, ramsey2014scalable, li2020nonparametric} have been developed to address this fundamental problem. 
One prominent class of methods relies on measuring the distance between the conditional distribution $P(X, Y \given Z)$ and the product of the marginals $P(X \given Z)P(Y \given Z)$. 
For Gaussian variables, the problem reduces to testing whether the partial correlation is zero \citep{fisher1915frequency}.
For discrete variables, hypothesis tests based on chi-squared statistics are widely employed \cite{ireland1968contingency,darroch1980markov}. 
Recent advancements have also explored kernel-based methods \cite{gretton2007kernel,zhang2011kernel} and information-theoretic measures such as conditional mutual information \citep{runge2018conditional,berrett2019nonparametric} that can capture complex nonlinear dependencies in a nonparametric fashion.

From the theoretical standpoint, CI testing has been investigated from the perspective of minimax optimality. For discrete distributions, the problem is well-understood and the minimax optimal rates have been established  \citep{chan2014optimal,diakonikolas2016new,canonne2018testing}.
In the nonparametric setting, \citet{neykov2021minimax} studied the fundamental limits of CI testing, deriving minimax optimal rates under smoothness assumptions. In addition, practical CI tests have been proposed \citep{kim2022local,kim2024conditional}.
In particular, \citet{jamshidi2024on} devised a Von Mises estimator for mutual information as a valid CI test under smoothness conditions.
These theoretical results provide valuable insights into the inherent difficulty of CI testing under different distributional setups.

\paragraph{Graphical model structure learning}
For undirected graphical models, the structure learning problem reduces to support recovery of the precision matrix \citep{meinshausen2006high,friedman2008sparse,cai2011constrained,liu2009nonparanormal}. 
For DAG learning, approaches can be broadly categorized into three main paradigms: score-based methods (e.g. greedy equivalence search) \citep{chickering2002optimal,nandy2018high}, constraint-based methods (e.g. PC algorithm) \citep{spirtes1991algorithm,friedman2013learning}, and methods that leverage specific distributional assumptions \citep{shimizu2006linear,hoyer2008nonlinear,peters2011identifiability,peters2014causal,peters2014identifiability}.
Especially, constraint-based methods rely on valid CI tests and the faithfulness assumption to operate \citep{kalisch2007estimating,marx2021weaker}, thus development of reliable and efficient CI tests directly impacts the performance of these structure learning methods.
For tree-structured models, the Chow-Liu algorithm \citep{chow1968approximating, chow1973consistency} provides an efficient method to find the optimal undirected tree structure by constructing a maximum weight spanning tree based on pairwise mutual information. 
Learning poly-trees/forests is generally more complex than learning undirected trees models, while remaining tractable compared to learning general DAGs \citep{rebane1987recovery,srebro2003maximum,tan2010learning,tan2011learning}. 
This problem has seen developments recently \citep{gao2021efficient,azadkia2021fast,jakobsen2022structure}.
Furthermore, learning poly-trees/forests is relevant in various applications, including reasoning and density estimation \citep{kim1983computational,liu2011forest}.

\vspace{-0.5em}
\paragraph{Sample complexity of structure learning}
For undirected graphical models, optimal sample complexities have been derived for various classes of undirected graphs, such as sparse graphs with bounded degree \cite{wang2010information,santhanam2012information,bresler2015efficiently,vuffray2016interaction,abbe2018community,misra2020information}. 
In contrast, determining the sample complexity for DAG learning is considerably more challenging due to the inherent asymmetry and the larger space of possible graph structures.
\citet{ghoshal2017information} provided lower bounds for structure learning of general DAG models.
\citet{gao2022optimal} established matching upper and lower bounds on the sample complexity as $\Theta[ q\log (\dd /q) ]$ for learning Gaussian DAGs under the restrictive equal variance assumption \citep{peters2014identifiability,loh2014high,chen2019causal}, where $q$ is the degree of the DAG, and this optimality result has been further extended to linear dynamic models \citep{veedu2024information}.
\citet{wang2024optimal} concluded the optimal sample complexity as $\Theta(\log\dd / c^2)$ for learning Gaussian poly-trees, where $c$ is the faithfulness parameter, and provided an efficient algorithm based on the classic PC algorithm \citep{spirtes2000causation}.
For nonparametric models, \citet{jamshidi2024on} applied the devised Von Mises CI test to the PC algorithm and obtain a dependence of $\mathcal{O}[(\frac{\Delta}{I_{\min}}\log \dd)^2]$, where $\Delta$ is the graph degree and $I_{\min}$ is a lower bound on the conditional mutual information, assuming the density function is sufficiently smooth and lower bounded (from zero).
In contrast, our work focuses on establishing a general connection between minimax CI testing and structure learning, including method-agnostic minimax lower bounds, that apply broadly to general distributions (especially the nonparametric ones) with minimal assumptions.

\section{Preliminaries}\label{sec:prelim}
Given a directed acyclic graph (DAG) $G=(V,E)$, $\pa(k) = \{j:(j,k)\in E\}$ denotes the parents of node $k\in V$.
The skeleton of $G$, $\sk(G)$, is the undirected graph formed by removing directions of all the edges in $G$.
A triplet $(j,\ell,k)$ is called unshielded if both $j,k$ are adjacent to $\ell$ but not adjacent to each other, graphically $j-\ell-k$; and is called a $v$-structure if additionally $j,k$ are parents of $\ell$, i.e. $j\rightarrow \ell \leftarrow k$.
A path is a sequence of distinct nodes $(h_1,\ldots, h_\ell)$ such that $(h_j, h_{j+1})$ is in $\sk(G)$.
A forest is an undirected graph where any two nodes are connected by at most one path. A poly-forest is a DAG whose skeleton is a forest.
Denote the set of all poly-forests over $d$ nodes as $\pltr=\pltr_d$.

A distribution $p$ satisfies the Markov property with respect to a DAG $G$ with $d$ nodes if the following factorization holds
\begin{align*}
    p(X) = p(X_1, \dots, X_d) = \prod_{k=1}^d p(X_k \given \pa(k))
\end{align*}
We consider the problem of \emph{structure learning}, with a focus on poly-forests, where we assume the existence of a poly-forest $G\in\pltr$ such that $p$ is Markov to $G$, and we aim to recover $G$ given $n$ i.i.d. samples from $p$.
In general, it is well-known that the DAG is not identifiable from observational data alone. Assuming faithfulness, $G$ is identified up to its Markov equivalence class (MEC), which is the set of DAGs that encode the same set of conditional independencies as $G$ and is represented by completed partially directed acyclic graph (CPDAG), denoted by $\mec$. We refer the readers to \citet{koller2009probabilistic} for more preliminaries on graphical models. 

\subsection{Measuring dependence in general models}

Since we assume the underlying DAG belongs to the class of poly-forests, the usual faithfulness assumption can be relaxed to tree-faithfulness \citep{wang2024optimal} for successful recovery.
To derive uniform, finite-sample bounds, we also need conditions on the minimum signal strength, as is standard in model selection and testing literature.
We first define a general notion of dependence measure that will be used to quantify the signal strength.
\begin{definition}[Dependence measure]\label{defn:dependence}
    Let $(X,Y,Z)\sim p$ be a triplet of random variables subject to some distribution $p$. A \emph{(conditional) dependence measure} $\dpdm(X;Y\given Z)$ is a functional of $p$ such that (1) $\dpdm(X;Y\given Z)\ge 0$; and (2) $\dpdm(X;Y\given Z)=0$ if and only if $X\indep Y\given Z$ under $p$.
\end{definition}
If $Z=\emptyset$, we simplify the notation by writing the \emph{marginal dependence} as $\dpdm(X;Y)=\dpdm(X;Y\given \emptyset)$.
This general measure of dependence is used to simplify our main theorem statement; in our examples (Sections~\ref{sec:param}-\ref{sec:nonparam}), we specify the dependence measure as the usual correlation coefficient for Gaussian distributions, total variation with the product of marginals for Bernoulli distributions, and total variation distance to the nearest independent instance for nonparametric continuous distributions \citep{canonne2018testing,neykov2021minimax,wang2024optimal}.
In more general settings (e.g. user-defined models), the dependence measure can be chosen based on modeling preference, as long as there exist consistent or efficient CI tests achieving provable error guarantees, it can be embedded into our framework.
Using this dependence measure, we can now define strong tree-faithfulness in a generic form.
\begin{definition}[$c$-strong tree-faithfulness]\label{defn:strongtreefaith}
A distribution $p$ is $c$-strong tree-faithful to a poly-forest $G$ with respect to the dependence measure $\dpdm$ if 
\begin{enumerate}
    \item For any two nodes connected $j-k$, we have $\dpdm(X_k; X_j\given X_\ell)\ge c$ for $\ell \in V\cup \{\emptyset\}\setminus \{k,j\}$;
    \item For any $v$-structure $k\rightarrow \ell\leftarrow j$, we have $\dpdm(X_k; X_j\given X_\ell)\ge c$.
\end{enumerate}
\end{definition}
Tree-faithfulness is a relaxed version of the general faithfulness assumption, which requires the CI relationships in data distribution to reflect the existence of edges in graph \citep{koller2009probabilistic,wang2024optimal}.

\subsection{CI testing and structure learning}
Since we aim to establish a statistical connection between CI testing and structure learning, we define each problem and the associated statistical quantities of interest as follows. 
In particular, given the close relationship between these two problems and the fact that structure learning often relies on CI testing as a subroutine, we first introduce the CI testing problem before defining the poly-forest learning problem.
To ground the abstract formulation of CI testing, we begin with a concrete example under the nonparametric setting to highlight the key concepts and notations. See Section~\ref{sec:nonparam} and Appendix~\ref{app:np} for more details of this example.
\begin{example}[Nonparametric models]\label{ex:demo}
    Let $\mathcal{P}$ be all continuous distributions supported on $[0,1]^3$ that admit Lipschitz continuous densities $p$. 
    For each distribution $p\in\mathcal{P}$ over a triplet of variables $(X,Y,Z)$, we measure the dependence between $X$ and $Y$ given $Z$ by the total variation distance $\inf_{q\in\mathcal{Q}}\|p-q\|_1$, where $\mathcal{Q}$ is the set of all conditionally independent distributions in $\mathcal{P}$. This serves as a valid  dependence measure $\dpdm$ for nonparametric distributions.
    The nonparametric CI testing problem asks for a test to distinguish the following two hypotheses:
    \begin{align*}
    \hzero : &\quad  p(X,Y,Z)\in\mathcal{P} \quad  \st \quad X\indep Y\given Z \\
    \hone : & \quad  p(X,Y,Z)\in\mathcal{P} \quad  \st \quad \inf_{q\in\mathcal{Q}}\|p-q\|_1  \ge r \,,
    \end{align*}
    for some $r>0$. A sufficiently large $r$ is necessary for consistent testing \citep{shah2020hardness}, and is also used to study the statistical hardness in terms of the minimax testing radius (defined in the sequel).
\end{example}
Building on the intuition from Example~\ref{ex:demo}, we formally introduce the CI testing problem in a generic form, which can be instantiated under various distributional settings.
Based on the elements of CI testing, we will then proceed to define the poly-forest learning problem such that the relationship between the two problems is explicit and clear.
At its core, CI testing is a fundamental statistical problem concerned with distinguishing distributions over triplet of variables $(X,Y,Z)$.
\begin{definition}[Conditional independence testing]\label{defn:problem:ci}
A conditional independence testing problem $\ciclass(\mathcal{P}, \dpdm, n)$ is defined by a class of distributions $\mathcal{P}$, dependence measure $\dpdm$ and sample size $n$,
and aims to distinguish two hypotheses of distributions:
\begin{align*}
\hzero : &\quad   p(X,Y,Z)\quad  \st \dpdm(X ; Y \given Z) =  0 \Leftrightarrow X\indep Y\given Z \\
\hone : & \quad  p(X,Y,Z)\quad  \st   \dpdm(X; Y \given Z) \ge r 
\end{align*}
where $p\in\mathcal{P}$, and $r$ is the signal strength to distinguish the two hypotheses.
A conditional independence test $\psi$ is a function that takes $n$ i.i.d. samples from $p$ and outputs a binary decision, i.e. $\psi:\{(X^{(i)},Y^{(i)},Z^{(i)})\}_{i=1}^n \mapsto \{0,1\}$ where $\psi=0$ indicates selecting the null $\hzero$. 
Fixing the sample size $n$, the minimax optimal testing radius of $\ciclass(\mathcal{P},\dpdm,n)$ is the infimum of $r=r_n>0$ in terms of $n$ (up to constants) such that there exists a test whose Type-I and Type-II errors are controlled:
\begin{align*}
    \inf_{\psi}\bigg\{\sup_{p\in \hzero}\E_p [\psi] + \sup_{p\in\hone}\E_p[1-\psi]\bigg\} \le \frac{1}{10} \,.
\end{align*}
\end{definition}
For a certain CI testing problem, one needs to specify the model class $\mathcal{P}$, and the dependence measure $\dpdm$. Consequently, the hypothesis classes $\hzero,\hone$ are determined.
In Example~\ref{ex:demo}, $\mathcal{P}$ includes all the Lipschitz densities over $[0,1]^3$ and $\dpdm$ is given by the total variational distance.
One goal of studying $\ciclass(\mathcal{P},\dpdm,n)$ is to derive the minimax testing radius $r_n$ such that the conditionally dependent and independent instances can be distinguished accurately. 
We proceed to introduce the poly-forest learning problem using this set-up. 
\begin{definition}[Poly-forest learning problem]\label{defn:problem:plfr}
    A poly-forest learning problem $\pfclass(\mathcal{P},\dpdm,c)$ is defined by the following family of distributions (i.e. graphical model):
    \begin{align*}
        \pfclass=\Big\{(p,G)\Bigm\vert\,\, p \text{ is Markov and }c\text{-strong tree-faithful}&\\
        \qquad\qquad \text{ to }G\in\mathcal \pltr \text{ with respect to }\dpdm, &\\
        \AND \forall j,k,\ell\in [d], p_{X_j,X_k,X_\ell}\in \mathcal{P}& \Big\}\,.
    \end{align*}
    The goal is to learn the Markov equivalence class of $G$ given i.i.d. samples from $p$.
    The optimal sample complexity of $\pfclass(\mathcal{P},\dpdm,c)$ refers to the smallest integer $n$ as sample size in terms of the number of nodes $\dd$ and the strong tree-faithfulness parameter $c$ (up to constants) such that the graph structure can be confidently learned by some estimator $\widehat{G}$:
    \begin{align*}
        n(\pfclass) = \inf\Big\{n \Bigm\vert \exists \widehat{G} \text{ s.t. }\sup_{(p,G)\in\pfclass} \prob(\widehat{G}\ne \mec) \le \frac{1}{10}\Big\}\,.
    \end{align*}
\end{definition}
The poly-forest model is defined by requiring the marginal distributions of all node triplets to belong to $\mathcal{P}$, thereby sharing the distributional properties considered in the CI testing problem $\mathcal{C}(\mathcal{P},\dpdm,n)$, e.g. Gaussianity or smoothness. 
Combined with Markov property and $c$-strong tree-faithfulness (with respect to $\dpdm$), this essentially implies that every node triplet comes from either $\hzero$ or $\hone$, with the signal strength $r$ in the testing problem replaced by the faithfulness parameter $c$ (and the two notations would be used interchangeably whenever the context is clear).
We focus on the optimal sample complexity of recovering the CPDAG for the poly-forest in $\pfclass(\mathcal{P},\dpdm,c)$.
The number $1/10$ in both Definitions~\ref{defn:problem:ci} and~\ref{defn:problem:plfr} can be replaced with any fixed small constant independent with other problem parameters, e.g. 0.05, as its dependence is not of primary interest.

The estimator that achieves the optimal sample complexity for $\pfclass(\mathcal{P},\dpdm,c)$ turns out to be the well-known PC algorithm \citep{spirtes1991algorithm}, and establishing that this is optimal in general is interesting in and of its own right. More precisely, we adapt the PC algorithm to poly-forests, as done in \citet{wang2024optimal} for Gaussian distributions. The resulting algorithm is called \texttt{PC-tree}, and determines the edges between any two nodes by testing their (conditional) independence given one other node.
The algorithm takes a valid CI test as input, is efficient and runs in polynomial time in number of nodes $\dd$, and is readily extended for poly-forests for general distributions.
We detail the specifics of the \texttt{PC-tree} algorithm in Appendix~\ref{app:pctree}.

\section{Equivalence between CI testing and structure learning}\label{sec:main}
We present our main result below, establishing a fundamental connection between CI testing $\ciclass(\mathcal{P},\dpdm,n)$ and structure learning $\pfclass(\mathcal{P},\dpdm,c)$.
\begin{theorem}\label{thm:main}
    Given a conditional independence testing problem $\ciclass(\mathcal{P},\dpdm,n)$ with an optimal test $\psi$ achieving the minimax testing radius $r_n\asymp n^{-1/\alpha}$, if there exist hard instances $p_0\in\hzero$ and $p_1\in \hone$ that are Markov and $c$-strong tree-faithful, then the optimal sample complexity of learning $\pfclass(\mathcal{P},\dpdm,c)$ is
    \begin{align}\label{eq:rate}
        n \asymp \frac{\log\dd}{c^\alpha} \,,
    \end{align}
    which is achieved by \texttt{PC-tree} with $\psi$.
\end{theorem}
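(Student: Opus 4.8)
The plan is to prove \eqref{eq:rate} as a matching pair of bounds: an achievability bound showing that \texttt{PC-tree}, run with a boosted copy of the optimal test $\psi$, recovers $\mec$ from $n\asymp\log\dd/c^\alpha$ samples, and an information-theoretic bound showing no estimator can do better, obtained by planting the hard CI instances $p_0,p_1$ into a family of $\Theta(\dd)$ poly-forests and invoking Fano's inequality.

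\textbf{Upper bound.} I would begin from the structural property of \texttt{PC-tree} due to \citet{wang2024optimal} (see Appendix~\ref{app:pctree}): to output the CPDAG of a poly-forest it suffices to answer correctly, for each ordered triple of nodes $(j,k,\ell)$, the queries ``$X_j\indep X_k$?'' and ``$X_j\indep X_k\given X_\ell$?'', and the algorithm issues at most $O(\dd^3)$ such queries in polynomial time. By Definition~\ref{defn:problem:plfr}, the marginal $p(X_j,X_k,X_\ell)$ of every queried triple lies in $\hzero$ (exactly when the queried conditional independence holds) or in $\hone$ (in which case $\dpdm\ge c$); hence each query is literally an instance of $\ciclass(\mathcal{P},\dpdm,c)$. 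Taking $\Theta(c^{-\alpha})$ samples makes $\psi$ attain testing radius $\le c$, so $\psi$ answers any query whose marginal is in $\hzero\cup\hone$ with error at most $1/10$; the majority vote $\widetilde\psi$ of $T=\Theta(\log(\dd^3/\delta))=\Theta(\log\dd)$ independent runs then drives the per-query error below $\delta/\dd^3$ by Hoeffding's inequality, at total sample cost $n=\Theta(\log\dd\cdot c^{-\alpha})$. A union bound over the $O(\dd^3)$ queries shows that with probability at least $1-\delta$ every answer supplied to \texttt{PC-tree} agrees with the one implied by $G$; here I would use both clauses of $c$-strong tree-faithfulness (Definition~\ref{defn:strongtreefaith}) to verify that adjacencies and $v$-structures of a poly-forest are always detectable by conditioning on at most one node, so that a correct CI oracle forces the correct skeleton and $v$-structure orientations and therefore $\widehat G=\mec$.

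\textbf{Lower bound.} Fix the hard instances $p_0\in\hzero$, $p_1\in\hone$ on a triple $(X,Y,Z)$; since they give opposite answers to ``$X\indep Y\given Z$'' the poly-forests they are faithful to have distinct CPDAGs, and for $(p_0,p_1)$ to witness the genuinely \emph{conditional} testing radius they take the form $p_0\sim$ a chain $X-Z-Y$ and $p_1\sim$ the $v$-structure $X\to Z\leftarrow Y$, sharing the $(X,Z)$- and $(Y,Z)$-marginals. Partition $\dd$ nodes into $m=\lfloor\dd/3\rfloor$ disjoint triples; for $k\in\{1,\dots,m\}$ let $P_k$ be the product distribution whose $k$-th triple follows $p_1$ and whose other triples follow $p_0$, and let $P_\star$ follow $p_0$ on every triple. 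Each $P_k$ is Markov and $c$-strong tree-faithful to a poly-forest $G_k$ (conditioning on a variable in another triple is vacuous, so every clause of Definition~\ref{defn:strongtreefaith} reduces to a within-triple clause for $p_0$ or $p_1$), and every node triple of $P_k$ has its marginal in $\hzero\cup\hone$, so $P_k$ is a legitimate instance of $\pfclass(\mathcal{P},\dpdm,c)$. Since $\overline{G_k}$ carries exactly one $v$-structure, sitting in triple $k$, any estimator with $\sup_P\prob(\widehat G\ne\mec)\le\delta$ recovers the index $k$ with error at most $\delta$; Fano's inequality with $P_\star^{\otimes n}$ as reference then gives $(1-\delta)\log m-\log 2\le\tfrac1m\sum_{k=1}^m\mathrm{KL}(P_k^{\otimes n}\,\|\,P_\star^{\otimes n})=n\,\mathrm{KL}(p_1\|p_0)$, the last equality because $P_k$ and $P_\star$ differ only on triple $k$. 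Finally, because $(p_0,p_1)$ witness the testing radius $c\asymp n^{-1/\alpha}$, the tensorized divergence $\chi^2(p_1^{\otimes n}\|p_0^{\otimes n})=(1+\chi^2(p_1\|p_0))^n-1$ must remain $O(1)$ at $n\asymp c^{-\alpha}$, forcing $\mathrm{KL}(p_1\|p_0)\le\chi^2(p_1\|p_0)\lesssim c^\alpha$; combining with the Fano bound yields $n\gtrsim\log\dd/c^\alpha$.

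\textbf{Main obstacle.} The delicate part is the bookkeeping in the lower bound rather than any single estimate: one must check that the planted $P_k$ genuinely meet \emph{every} requirement of $\pfclass(\mathcal{P},\dpdm,c)$ — in particular that no node triple (for instance the two only weakly dependent endpoints of a decoy chain together with a node from another triple) has dependence strictly between $0$ and $c$, and that the product construction stays inside the model class $\mathcal{P}$. This may require choosing the hard instances with a little extra care, or a mild closure assumption on $\mathcal{P}$, and is precisely what gets verified for the Bernoulli, Gaussian, and nonparametric models in Sections~\ref{sec:param}--\ref{sec:nonparam}. Relatedly, extracting the quantitative bound $\mathrm{KL}(p_1\|p_0)\lesssim c^\alpha$ from ``hardness of testing'' uses that the testing lower bound is of the standard $\chi^2$- or Hellinger-divergence type, as it is in all examples considered. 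The upper bound is comparatively routine once the \texttt{PC-tree} correctness guarantee is transported from poly-trees to poly-forests.
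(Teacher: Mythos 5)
Your proposal follows essentially the same route as the paper's proof (Theorem~\ref{thm:main:full} and its proof in Appendix~\ref{app:main}). For the upper bound, both boost the per-query error of $\psi$ by majority vote (the paper calls this the median trick, Proposition~\ref{prop:median}) to obtain an exponential failure probability $\exp(-C_0 n c^\alpha)$, union-bound over the $O(\dd^3)$ CI queries, and appeal to the \texttt{PC-tree} correctness guarantee under $c$-strong tree-faithfulness. For the lower bound, both partition the $\dd$ nodes into $\Theta(\dd)$ disjoint triples, plant the hard instance $p_1$ into one triple and $p_0$ into the rest, note $\log M\asymp\log\dd$ and $\mathrm{KL}(P_k\|P_0)=\mathrm{KL}(p_1\|p_0)\lesssim c^\alpha$, and invoke a multiple-hypothesis testing bound (you use Fano, the paper uses Tsybakov's method in Corollary~\ref{coro:tsy}; these are interchangeable here).

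The only substantive differences are cosmetic. First, you posit a specific shape for the hard pair (a chain for $p_0$, a $v$-structure for $p_1$, with matched $(X,Z)$- and $(Y,Z)$-marginals); the paper makes no such restriction, and in fact its Bernoulli and Gaussian instances take $p_0$ to be the fully independent (empty-graph) distribution, so the marginals do not match. This does not affect your argument's validity since all that is used is that $p_0\in\hzero$, $p_1\in\hone$, both Markov and $c$-strong tree-faithful to poly-forests with distinct CPDAGs, and $\mathrm{KL}(p_1\|p_0)\lesssim c^\alpha$. Second, you recover the KL bound $\mathrm{KL}(p_1\|p_0)\lesssim c^\alpha$ by a $\chi^2$-tensorization argument from the statement that $(p_0,p_1)$ ``witness'' the testing radius; the paper's formal version Theorem~\ref{thm:main:full} simply assumes this bound directly as condition (C2), which is the precise meaning of ``hard instances'' in the informal Theorem~\ref{thm:main}. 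Your unpacking is a faithful interpretation, and your flagged ``main obstacle'' (verifying the planted $P_k$ stay within $\pfclass$) is exactly what the paper isolates as condition (C3) and checks case-by-case in Sections~\ref{sec:param}--\ref{sec:nonparam}.
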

Theorem~\ref{thm:main} establishes a fundamental statistical reduction between the two problems: the optimality of a CI testing problem implies the optimality of corresponding structure learning for poly-forests. 
In particular, the inherent difficulty in CI testing directly translates to the difficulty in poly-forest learning, and once the minimax solution (an optimal CI test $\psi$) is obtained, it can be directly plug-in for learning the poly-forest.

The condition in Theorem~\ref{thm:main} looks for two hard instances $p_0\in\hzero$ and $p_1\in\hone$ that are difficult to distinguish in the sense of small KL-divergence, see \ref{cond:lb} for the detailed requirement. We stress that this condition is imposed merely on triplet of variables (cf.~Definition~\ref{defn:problem:ci}), and is a standard step to establish lower bounds when studying the minimax optimality for CI testing.
Additional requirements on the graphical properties, i.e. Markov and strong tree-faithfulness, of these instances are introduced to ensure their validity in the context of poly-forest learning. 
As will be shown in Section~\ref{sec:nonparam}, this technical requirement sometimes necessitates minor modifications on existing constructions in the literature of CI testing.

In the optimal sample complexity of structure learning~\eqref{eq:rate}, the effect of dimensionality comes in as a factor of $\log\dd$,
and the exponent $\alpha$ on dependence of signal draws connection between CI testing and structure learning problems, as it directly translates from the optimal testing radius to the optimal sample complexity. 
For parametric distributions, we typically have $\alpha=2$, corresponding to the parametric rate. While for nonparametric distributions, it is often the case that $\alpha>2$ and depends on the smoothness conditions.
We provide examples on both cases in Sections~\ref{sec:param}-\ref{sec:nonparam}.

To clarify our contribution relative to the existing literature, we compare with the optimality result in \citet{wang2024optimal}, which crucially relies on the specific properties of the Gaussian distribution and therefore does not extend to discrete or nonparametric models.
In contrast, our black-box reduction between testing and structure learning is model-agnostic and applicable to general, including fully nonparametric, settings where partial correlation is no longer useful and linearity fails. It allows us to obtain minimax-optimal sample complexity guarantees without relying on specific parametric properties of the Gaussian distribution. Moreover, such reduction result highlights the fundamental connection with CI testing in the view of sample efficiency.

To apply Theorem~\ref{thm:main} on concrete problems to obtain optimality, one needs to specify the distributional assumptions in the CI testing problem, design an optimal CI test, and find proper instances from the (in)dependent hypotheses that are close enough in KL-divergence.
In the remainder of this paper, we show how to apply Theorem~\ref{thm:main} to Bernoulli, Gaussian, and nonparametric continuous distributions.
In practice, given a powerful test $\psi$ tailored to the models satisfying certain distributional assumptions, one can directly integrate it into \texttt{PC-tree} algorithm for efficient poly-forest learning. 
We demonstrate this across the three distributional settings in the experiments in Section~\ref{sec:expt}.

A full detailed statement of Theorem~\ref{thm:main} with explicit constants, technical conditions, additional discussions, and proof is provided in Theorem~\ref{thm:main:full} in Appendix~\ref{app:main}.
We end this section with a proof sketch: For the upper bound, we first construct an amplified version of the test $\psi$ via the median trick \citep{motwani1995randomized}, which translates the constant Type I and II error guarantees into an exponentially decaying error probability (cf.~Proposition~\ref{prop:median}). We then apply this amplified test as a subroutine to \texttt{PC-tree}. The desired upper bound follows by adapting the analysis in the proof of Theorem 4.3 in \citet{wang2024optimal}, noting that the error probability of each CI test is sufficiently controlled to ensure uniform consistency over all edge decisions.
For the lower bound, we apply Tsybakov’s method (Corollary~\ref{coro:tsy}). We construct an ensemble of $\asymp d$ candidate distributions, each corresponding to a distinct poly-forest, by embedding the hard instances $p_0$ and $p_1$ into disjoint three-node subgraphs and stacking them across the graph. The lower bound follows by verifying that the pairwise KL divergences between these distributions are uniformly bounded.

\section{Applications to Bernoulli and Gaussian distributions}\label{sec:param}
In this and the following sections, we illustrate by examples  the broad applicability of the main optimality result (Theorem~\ref{thm:main}) through a series of representative distributions.
For each example, we specify the associated model class, provide a valid CI test, and give hard instances that satisfy the condition of Theorem~\ref{thm:main}.
For the sake of space, we state and discuss the key conclusions in the main paper and defer full details of the model class descriptions to Appendix~\ref{app:details}.

\subsection{Bernoulli distribution}\label{sec:param:bern}
We start with Bernoulli distribution. 
As mentioned, it suffices to define the CI testing problem to set the stage. 
We consider all multivariate Bernoulli distributions with dimension being three, i.e. $\prob(X=x, Y=y, Z=z)= p(x,y,z),  \forall (x,y,z)\in\{0,1\}^3$ and  $\sum_{x,y,z}p(x,y,z)=1$.
Formal details are deferred to Appendix~\ref{app:details:bern}.
We measure dependence using 
\begin{align*}
    & \dpdm^{Ber}(X,Y\given Z) \\
    =&  \sum_z p(z)\sum_{x,y}\Big|p(x,y\given z) -p(x\given z)p(y\given z)\Big|
\end{align*}
which is the total variation distance with the product of its marginals and is a valid dependence measure.
These elements lead to CI testing problem $\ciclass^{Ber}$ and the poly-forest learning problem $\pfclass^{Ber}$, where $Ber$ denotes ``Bernoulli''. 
$\pfclass^{Ber}$ effectively contains all multivariate Bernoulli distributions with dimension $\dd$ and Markov and $c$-strong tree-faithful to some poly-forest $G\in \pltr$:
\begin{align*}
     (X_1,X_2\ldots,X_\dd) \sim & \prob(X_1=x_1, X_2=x_2,\ldots, X_\dd=x_\dd), \\
    &\forall (x_1,x_2\ldots,x_\dd)\in\{0,1\}^\dd\,.
\end{align*}

Now we proceed to construct a test based on thresholding the estimation of the dependence measure by the sample counterpart, inspired by the classical $\chi^2$ independence test:
\begin{align}\label{eq:bern:test}
    &\widehat{\dpdm}^{Ber}(X,Y\given Z) \\\nonumber
    =&  \sum_z \widehat{p}(z)\sum_{x,y}\Big|\widehat{p}(x,y\given z) - \widehat{p}(x\given z)\widehat{p}(y\given z)\Big|\\
    \psi^{Ber} =& \mathbbm{1}\{\widehat{\dpdm}^{Ber}(X,Y\given Z) \ge c/2\} 
\end{align}
where $\widehat{p}(x) = \sum_i \mathbbm{1}\{X_i=x\} / n$ for $x\in\{0,1\}$ and other estimates are analogously defined. 

For the hard instance constructions, we consider $p_0^{Ber}(X,Y,Z)$ under which $X,Y,Z$ are independent $\text{Bern}(\frac{1}{2})$ random variables, and the alternative distribution $p^{Ber}_1(X,Y,Z)$ is given as follows:
\begin{align*}
    & Z\sim \text{Bern}\Big(\frac{1}{2}\Big), \quad  
    X\given Z \sim\begin{cases}
        \text{Bern}(\frac{1}{2}+c) & Z=1 \\
        \text{Bern}(\frac{1}{2}-c) & Z=0
    \end{cases}\,, \\
    & \quad 
    Y\given X \sim\begin{cases}
        \text{Bern}(\frac{1}{2}+c) & X=1 \\
        \text{Bern}(\frac{1}{2}-c) & X=0
    \end{cases} \,.
\end{align*}
It is easy to see that $p_0^{Ber}$ is constructed to be Markov and tree-faithful to an empty graph, while $p_1^{Ber}$ is constructed to follow a three-node chain graph $Z\to X\to Y$, both of which are poly-forests.

Then the following theorem checks the validity of the test and construction, and concludes the optimality of learning Bernoulli poly-forest via Theorem~\ref{thm:main}. See proof in Appendix~\ref{app:bern}.
\begin{theorem}\label{thm:param:bern}
    $\psi^{Ber}$ is optimal for $\ciclass^{Ber}$ with optimal testing radius being $r_n\asymp n^{-1/2}$, and the optimal sample complexity of learning $\pfclass^{Ber}$ is
    \begin{align*}
        n \asymp \frac{\log\dd}{c^2} \,,
    \end{align*}
    which is achieved by \texttt{PC-tree} with $\psi^{Ber}$.
\end{theorem}

\subsection{Gaussian distribution}\label{sec:param:gauss}
We next consider Gaussian distribution as another parametric example to illustrate the main result. 
Since this application will recover the established optimality in \citet{wang2024optimal}, we only collect the key elements and defer most details to Appendix~\ref{app:details:gauss}.
We consider all multivariate Gaussian distributions with dimension being three and measure dependence using the partial correlation coefficient (cf.~\eqref{eq:gauss:m}).
The Gaussian CI testing problem $\ciclass^{Gauss}$ and the Gaussian poly-forest learning problem $\pfclass^{Gauss}$ are defined, where $Gauss$ represents ``Gaussian''. 

We employ a valid CI test $\psi^{Gauss}$ similar to $\psi^{Ber}$ by thresholding the sample partial correlation (cf.~\eqref{eq:gauss:test}).
For the hard instances, let $p_0^{Gauss}$ be $\mathcal{N}(\mathbf{0}_3, I_3)$ and $p_1^{Gauss}$ be generated as
\begin{align*}
    & Z \sim\mathcal{N}(0,1), \quad X=\beta Z + \mathcal{N}(0,1-\beta^2)\,,\\
    & \quad Y = \beta X + \mathcal{N}(0,1-\beta^2) \,,
\end{align*}
where $\beta=2c$. 
Likewise in the Bernoulli case, $p_0^{Gauss}$ is constructed to be Markov and tree-faithful to an empty graph, and $p_1^{Gauss}$ is constructed for the chain graph $Z\to X\to Y$.
It remains to check the validity of $\psi^{Gauss}$, $p_0^{Gauss}$ and $p_1^{Gauss}$. Applying Theorem~\ref{thm:main}, we have the optimality of learning Gaussian poly-forest (proof is in Appendix~\ref{app:gauss}):
\begin{theorem}\label{thm:param:gauss}
    $\psi^{Gauss}$ is optimal for $\ciclass^{Gauss}$ with optimal testing radius being $r_n\asymp n^{-1/2}$, and the optimal sample complexity of learning $\pfclass^{Gauss}$ is $n \asymp \log\dd / c^2$, which is achieved by \texttt{PC-tree} with $\psi^{Gauss}$.
\end{theorem}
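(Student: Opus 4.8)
The plan is to verify the two hypotheses of Theorem~\ref{thm:main} for the Gaussian setup and then simply invoke it. Concretely, I must (i) show that $\psi^G$ is a minimax-optimal test for $\ciclass^G$ with testing radius $c \asymp n^{-1/2}$ (so that $\alpha = 2$), and (ii) check that the proposed hard instances $p_0^G$ and $p_1^G$ are valid --- that they lie in $\hzero$ and $\hone$ respectively, are Markov and $c$-strong tree-faithful to poly-forests, and are close in KL-divergence in the sense required by Condition~\ref{cond:lb}. Once both are established, Theorem~\ref{thm:main} immediately yields $n \asymp \log\dd / c^2$ and the fact that \texttt{PC-tree} with $\psi^G$ achieves it.

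For part (i), the upper bound follows from a concentration argument: the sample partial correlation $\widehat{\rho}$ concentrates around the population partial correlation $\rho$ at the parametric rate, so that $|\widehat{\rho} - \rho| \lesssim n^{-1/2}$ with high probability (using, e.g., Fisher's $z$-transform or a direct Bernstein-type bound on the entries of the sample covariance and its inverse over a three-dimensional Gaussian). Thresholding at $c/2$ then separates $\hzero$ (where $\rho = 0$) from $\hone$ (where $|\rho| \ge c$) provided $c \gtrsim n^{-1/2}$, giving total error below $1/10$. For the matching lower bound, the standard two-point (Le Cam) argument applies: take $p_0^G = \mathcal{N}(0, I_3)$ and a perturbation with partial correlation exactly $c$; the KL-divergence between the $n$-fold products scales like $n c^2$, so no test can succeed when $c = o(n^{-1/2})$. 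This pins down the optimal testing radius as $c \asymp n^{-1/2}$, i.e. $\alpha = 2$.

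For part (ii), I would check each requirement for $p_0^G$ and $p_1^G$ directly. The null $p_0^G = \mathcal{N}(\mathbf{0}_3, I_3)$ has all correlations zero, so $X, Y, Z$ are mutually independent, it lies in $\hzero$, and it is trivially Markov and tree-faithful to the empty graph. For $p_1^G$, the linear SEM $Z \to X \to Y$ with coefficient $\beta = 2c$ and unit node variances gives $\mathrm{cor}(X,Z) = \beta$, $\mathrm{cor}(X,Y) = \beta$, and $\mathrm{cor}(Y,Z) = \beta^2$; a short computation of the $3\times 3$ precision matrix shows the partial correlation $\rho(X;Y \given Z)$ equals $\beta/(1+\beta^2)^{1/2}$ or a comparable expression, which is $\ge c$ for the chosen $\beta = 2c$ and $c$ small, placing $p_1^G$ in $\hone$. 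The Markov property to the chain $Z \to X \to Y$ is immediate from the SEM, and $c$-strong tree-faithfulness requires checking that every edgewise (conditional) dependence measure and the $v$-structure condition are $\ge c$; since there is no $v$-structure here and the relevant correlations/partial correlations are all of order $\beta = 2c$, this holds for small enough $c$ up to adjusting constants. Finally, the KL-closeness in Condition~\ref{cond:lb} follows because the KL-divergence between $p_0^G$ and $p_1^G$ (both three-dimensional Gaussians differing by $O(c)$ in covariance) is $O(c^2)$, matching the requirement used in the generic lower bound embedding of Theorem~\ref{thm:main}.

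The main obstacle is the precise bookkeeping in part (ii): ensuring that the single parameter $\beta = 2c$ simultaneously makes $p_1^G$ satisfy the $\ge c$ lower bound in $\hone$ (under the \emph{partial correlation} dependence measure, not the raw correlation) \emph{and} the $c$-strong tree-faithfulness bounds on all conditioning sets, \emph{and} keeps the KL-divergence $O(c^2)$ --- these pull the constant in different directions, so the choice $\beta = 2c$ (rather than $\beta = c$) is what provides the needed slack, and I would need to verify the exact inequalities carefully, likely for $c$ below an absolute constant. Everything else is either a routine Gaussian concentration bound or a direct appeal to Theorem~\ref{thm:main}; since \citet{wang2024optimal} already established this exact optimality result for Gaussian poly-trees, the computations are known to go through, and the contribution here is to re-derive it as a clean corollary of the general reduction.
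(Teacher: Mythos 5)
Your proposal is correct and follows essentially the same route as the paper: verify conditions \ref{cond:ub}--\ref{cond:graph} for $p_0^G$, $p_1^G$ and invoke Theorem~\ref{thm:main:full}, with the same computations (edge correlations $2c$, partial correlations $\beta/\sqrt{1+\beta^2}=2c/\sqrt{1+4c^2}\ge c$, and $\mathbf{KL}(p_1^G\|p_0^G)=-\log(1-4c^2)=O(c^2)$). The only cosmetic difference is that the paper imports the type-I/II error control for $\psi^G$ directly from Lemma~C.1 of \citet{wang2024optimal} rather than re-deriving the concentration of the sample partial correlation, and it treats the Le Cam lower bound as already built into Theorem~\ref{thm:main:full} via \ref{cond:lb} rather than as a separate step.
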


\subsection{Extension to optimal poly-tree learning}
Poly-tree is a subset of poly-forest whose skeleton has any two nodes be connected by \emph{exactly} one path.
The optimality results for both Bernoulli and Gaussian distributions can be extended to poly-tree learning, whose problem definition simply replaces the poly-forest in Definition~\ref{defn:problem:plfr} by poly-tree. 
Since poly-tree is a sub-class of poly-forest, the upper bound result of \texttt{PC-tree} applies. Then it suffices to derive a lower bound to match the $\log\dd / c^2$ dependence, which requires the construction to be Markov and faithful to not just poly-forest, but further poly-tree.

The optimality of poly-tree learning in Gaussian setting is investigated in \citet{wang2024optimal}, where the optimal sample complexity is shown to be $n\asymp\log\dd / c^2$ with the same notion of $c$-strong tree-faithfulness assumed (Definition~\ref{defn:strongtreefaith}).
Here we show the same extension holds for Bernoulli poly-tree due to the parametric nature.
The main difficulty lies in the lower bound construction, for which we cannot directly adopt the construction in \citet{wang2024optimal} due to the discrepancy between continuous and discrete distributions.
While analogously, we consider all possible directed Markov chains in form of $X_{\pi_1}\to X_{\pi_2}\to \ldots X_{\pi_d}$ for some permutation $\pi$, which is a subset of poly-trees, and specify the construction as follows:
For each Markov chain $T$, consider $P_T$ given by $X_{\pi_1}\sim\text{Bern}(1/2)$ and for $k=2,3,\ldots,d$,
\begin{align}\label{eq:pltr:lb}
    X_{\tau_k} \given X_{\pi_{k-1}} \sim\begin{cases}
        \text{Bern}(\frac{1}{2}+c) & X_{\pi_{k-1}}=1 \\
        \text{Bern}(\frac{1}{2}-c) & X_{\pi_{k-1}}=0
    \end{cases}\,.
\end{align}
The idea is to add small perturbation according to the amount of faithfulness parameter such that the KL divergence between distributions induced by different Markov chains can be bounded.
The validity of this construction is due to Lemma~\ref{lem:bern:lb:kl} and~\ref{lem:bern:lb:faith} proved in Appendix~\ref{app:tree}. We conclude the optimality below:
\begin{theorem}\label{thm:tree}
    The optimal sample complexity of Bernoulli or Gaussian poly-tree learning is $n \asymp \log\dd /c^2$, which is achieved by \texttt{PC-tree} with $\psi^B$ or $\psi^G$.
\end{theorem}

\section{Application to nonparametric models}\label{sec:nonparam}
For both Bernoulli and Gaussian distributions, the optimal sample complexity for learning poly-forest is $n\asymp\log\dd / c^2$, i.e. $\alpha=2$ in Theorem~\ref{thm:main}. This result primarily arises from the parametric nature of these distributions. 
To explore the implications beyond the parametric setting, we now shift our focus to a nonparametric continuous distribution. This allows us to examine how the nonparametric nature influences the value of $\alpha$, leading to a distinct theoretical outcome.

We adopt the framework in~\citet{neykov2021minimax} and provide the essential elements in Appendix~\ref{app:details:np}, see details therein. 
As alluded to in Example~\ref{ex:demo}, we consider all continuous distributions over $[0,1]^3$, which admit continuous densities $p(X,Y,Z)$. We measure the dependence using
\begin{align*}
    \dpdm^{NP}(X,Y\given Z) = \inf_{q\in \mathcal{Q}} \|p - q\|_1 \,,
\end{align*}
where $\mathcal{Q}$ is the class of continuous distributions $q(X,Y,Z)$ over $[0,1]^3$ such that $X\indep Y\given Z$, and $\|p - q\|_1 = \int |p(x,y,z) - q(x,y,z)| dxdydz$. This measures the distance to the closest (conditionally) independent distributions.
In addition, smoothness conditions are imposed on the densities (cf.~Definitions~\ref{defn:lip}-\ref{defn:holder}), characterized by a smoothness parameter $s$ and used to contrast with the parametric cases.
Together, these defines the CI testing class $\ciclass^{NP}$ and the poly-forest learning problem $\pfclass^{NP}$, where $NP$ denotes ``nonparametric''.

Now we proceed to verify the applicability of Theorem~\ref{thm:main} for this nonparametric setting. We use the minimax optimal CI test introduced in Section~5.3 of \citet{neykov2021minimax}, denoted as $\psi^{NP}$, which is based on classic U-statistics to measure the (conditional) dependence between the $X$ and $Y$. $\psi^{NP}$ is a valid choice and satisfies the type I and II error controls with $\alpha=\frac{5s+2}{2s}$ (see Theorem~5.6 therein).

For the hard instances, we design the constructions, denoted as $p_0^{NP}$ and $p_1^{NP}$, based on a modification of the ones used for proving lower bounds for nonparametric CI testing in \citet{neykov2021minimax}. 
Although the original constructions of $p_0$ and $p_1$ are close in KL divergence, the issue lies in the unfaithfulness of the distribution, thus they cannot be directly applied for poly-forest learning setting.
Specifically, the original construction of the alternative $p_1$ only characterizes the conditional dependence but accidentally leads to marginal independence between variables, which cannot be faithful to any poly-forest (over triplet of nodes).

We modify the construction by adding back the marginal dependence with extra complexity in the analysis. Specifically, let $p_0^{NP}$ be independent uniform distributions $Unif^3[0,1]$, which is Markov to the empty graph. For $p_1^{NP}$, we consider a $V$-structure $X\rightarrow Z \leftarrow Y$, which is a three-node poly-forest. Under this graph, a faithful distribution is supposed to have $X\indep Y$ while $X\not\indep Y\given Z$ and $X\not\indep Z,Y\not\indep Z$.
Thus $p_1^{NP}$ is specified as follows: $X,Y\sim Unif[0,1]$ and $p_{Z\given X,Y}$ is given by a perturbation to the uniform depending on $X$ and $Y$:
\begin{align*}
    p(Z\given X,Y) = 1 + \gamma_{\Delta}(X,Y)\eta_{\nu}(Z) \,,
\end{align*}
for some functions $\gamma_{\Delta}$ and $\eta_{\nu}$ governing the perturbation, which are specified in the proof.
This construction is in the same form as the one in \citet{neykov2021minimax}. However, the functions are constructed such that the $X$ (or $Y$) and $Z$ are marginally dependent, thus faithfulness is satisfied.
See more details about these constructions in the proof (Appendix~\ref{app:np}). Applying Theorem~\ref{thm:main}, we have the optimality of learning nonparametric continuous poly-forest.
\begin{theorem}\label{thm:param:np}
    $\psi^{NP}$ is optimal for $\ciclass^{NP}$ with optimal testing radius being $r_n\asymp n^{-2s / (5s+2)}$, and the optimal sample complexity of learning $\pfclass^{NP}$ is
    \begin{align*}
        n \asymp \frac{\log\dd}{c^{\frac{5s+2}{2s}}} \,,
    \end{align*}
    which is achieved by \texttt{PC-tree} with $\psi^{NP}$.
\end{theorem}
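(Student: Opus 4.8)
The plan is to obtain this as a corollary of Theorem~\ref{thm:main} with the exponent $\alpha=\frac{5s+2}{2s}$. Two ingredients are required. First, that $\psi^{NP}$ is an optimal CI test for $\ciclass^{NP}$ whose minimax testing radius is $c\asymp n^{-2s/(5s+2)}=n^{-1/\alpha}$. Second, that the pair $(p_0^{NP},p_1^{NP})$ qualifies as hard instances in the sense required by Theorem~\ref{thm:main}, i.e.\ satisfies Condition~\ref{cond:lb}: $p_0^{NP}\in\hzero$, $p_1^{NP}\in\hone$, both Markov and $c$-strong tree-faithful to poly-forests, and $\mathrm{KL}(p_1^{NP}\,\|\,p_0^{NP})\lesssim c^{\alpha}$. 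Given these, Theorem~\ref{thm:main} immediately delivers the optimal sample complexity $n\asymp\log\dd/c^{\alpha}$, with the matching upper bound achieved by \texttt{PC-tree} run with $\psi^{NP}$.

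\textbf{Testing optimality.} For the first ingredient I would invoke Theorem~5.6 of \citet{neykov2021minimax}: under the stated smoothness, the $U$-statistic test $\psi^{NP}$ simultaneously controls type~I and type~II error at any fixed constant level (in particular $1/10$) once $c\gtrsim n^{-2s/(5s+2)}$, and their matching minimax lower bound shows this rate cannot be improved; hence the minimax testing radius of $\ciclass^{NP}$ is $c\asymp n^{-2s/(5s+2)}$, so $\alpha=(5s+2)/(2s)$. The only work here is to confirm $\psi^{NP}$ is a valid CI test in the sense of Definition~\ref{defn:problem:ci} and to align the smoothness parametrization of \citet{neykov2021minimax} with the model class $\mathcal{P}$.

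\textbf{Hard instances.} The null $p_0^{NP}=\unif^3[0,1]$ has $X\indep Y\given Z$, so lies in $\hzero$, is Markov to the empty graph, and is $c$-strong tree-faithful vacuously. The substance is $p_1^{NP}$, attached to the $V$-structure $X\to Z\leftarrow Y$ via $X,Y$ independent $\unif[0,1]$ and $p(Z\given X,Y)=1+\gamma_\Delta(X,Y)\eta_\nu(Z)$. I would check, in order: \emph{(a)} the density factorizes as $p(X)p(Y)p(Z\given X,Y)$, matching the $V$-structure factorization (so $p_1^{NP}$ is Markov), and verify that at the perturbation scale chosen below the H\"older/Lipschitz norm of $p_1^{NP}$ stays within $\mathcal{P}$; \emph{(b)} $p_1^{NP}\in\hone$, i.e.\ $\dpdm^{NP}(X;Y\given Z)=\inf_{q\in\mathcal{Q}}\|p_1^{NP}-q\|_1\gtrsim c$, which is the core of the \citet{neykov2021minimax} lower-bound analysis and persists because the $L_1$ distance to conditional independence is driven by the cross-dependence of $\gamma_\Delta$ on $(X,Y)$; \emph{(c)} the remaining $c$-strong tree-faithfulness conditions $\dpdm^{NP}(X;Z\given X_\ell)\gtrsim c$ and $\dpdm^{NP}(Y;Z\given X_\ell)\gtrsim c$, for $X_\ell$ either absent or the remaining variable --- this is exactly where the modification bites, since the construction of \citet{neykov2021minimax} has $\int_0^1\gamma_\Delta(x,y)\,dy\equiv 0$ and hence $X\indep Z$, whereas we take $\gamma_\Delta$ with non-vanishing, non-constant marginal slices so that $X$-$Z$ and $Y$-$Z$ are dependent at level $c$ while $X\indep Y$ is preserved; \emph{(d)} $\mathrm{KL}(p_1^{NP}\,\|\,p_0^{NP})$: since $p_0^{NP}$ has density $1$, a second-order expansion bounds it by a constant multiple of $\int(\gamma_\Delta\eta_\nu)^2=\|\gamma_\Delta\|_2^2\|\eta_\nu\|_2^2$, which is calibrated against the $L_1$ lower bound of \emph{(b)} to give the Le Cam trade-off $\mathrm{KL}\lesssim c^{\alpha}$ --- precisely the trade-off underlying the testing lower bound. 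Combining \emph{(a)}--\emph{(d)} verifies Condition~\ref{cond:lb}, and the result then follows from Theorem~\ref{thm:main}.

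\textbf{Main obstacle.} The delicate point is the joint calibration in \emph{(c)}--\emph{(d)}: a single perturbation $\gamma_\Delta\eta_\nu$ must simultaneously (1) keep $p_1^{NP}$ at $L_1$ distance $\gtrsim c$ from conditional independence, (2) stay within KL $\lesssim c^{\alpha}$ of $p_0^{NP}$, (3) induce marginal $X$-$Z$ and $Y$-$Z$ dependence at level $c$ for faithfulness to the $V$-structure, and (4) remain in the smoothness class $\mathcal{P}$. The \citet{neykov2021minimax} construction achieves (1), (2), (4) but fails (3); re-introducing the marginal dependence --- by modifying $\gamma_\Delta$ so its marginal integrals do not vanish --- without inflating the KL divergence, breaking the smoothness budget, or cancelling the $L_1$ lower bound is the crux of the argument, and is what the text calls the extra complexity in the analysis.
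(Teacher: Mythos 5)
Your high-level plan, the choice of hard instances, and the identification of the crux (the Neykov--Balakrishnan--Wasserman construction kills the marginal $X$--$Z$, $Y$--$Z$ dependence, so the tensor base $\widetilde{h}$ must be replaced by one whose one-dimensional integrals $\int\widetilde{h}\,dx$, $\int\widetilde{h}\,dy$ are proportional to $h$ rather than vanishing) all match the paper. The upper bound via Theorem~5.6 of \citet{neykov2021minimax} and the calibration choices $m'=m^{1/s}$, $\rho^3\asymp m^{-(3/2+1/s)}$, $m\asymp c^{-1}$ are also what the paper uses, and the three constraints you list under ``main obstacle'' are exactly the tree-faithfulness/Lipschitz/H\"older trade-offs the paper checks.

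The one step where your mechanism diverges from the paper's, and where it would actually fail, is item~\emph{(d)}, the KL bound. You propose a second-order expansion of $\mathrm{KL}(p_1^{NP}\|\,p_0^{NP})$ for a \emph{fixed} Rademacher pattern $(\Delta,\nu)$, which gives $\int(\gamma_\Delta\eta_\nu)^2 = \|\gamma_\Delta\|_2^2\|\eta_\nu\|_2^2 \asymp \rho^4(m')^2\cdot\rho^2 m = \rho^6 m(m')^2$. Under the stated calibration this is $m^{-(3+2/s)}\cdot m\cdot m^{2/s}=m^{-2}\asymp c^{2}$, i.e.\ the \emph{parametric} rate, which is strictly larger than the target $c^{\alpha}=c^{5/2+1/s}$ whenever $s<\infty$. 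So the fixed-pattern second-order bound does not verify Condition~\ref{cond:lb}. What the paper actually does is the Ingster-style $\chi^2$ computation that averages over independent copies $\Delta,\Delta',\nu,\nu'$ of the Rademacher patterns: the cross term becomes $\rho^6\langle\Delta,\Delta'\rangle\langle\nu,\nu'\rangle$, the exponential-moment bound yields $\chi^2+1 \le (1-(\rho^6)^2 m(m')^2)^{-1/2}$, and writing $t=\rho^6\sqrt{m}\,m'$ this is $\lesssim 1+2t$, i.e.\ $\chi^2 \lesssim \rho^6\sqrt{m}\,m' \asymp m^{-(5/2+1/s)}\asymp c^{\alpha}$. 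The Rademacher averaging trades a factor $\sqrt{m}\,m'$ for your $m(m')^2$, and that gap is exactly the difference between the parametric $c^2$ and the claimed nonparametric $c^{\alpha}$. In short, you need the mixture/$\chi^2$ argument here, not a pointwise Taylor expansion, and this is the one genuinely missing ingredient in your proposal.
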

Theorem~\ref{thm:param:np} highlights the larger sample complexity of nonparametric setting compared to the parametric one for poly-forest learning. The difference lies in the dependence on the strong tree-faithfulness parameter $c$, where the resulting exponent $\alpha = \frac{5s+2}{2s} > 2$ comes directly from the intrinsic hardness of nonparametric CI testing.
Moreover, the modification for $p_0^{NP}$ and $p_1^{NP}$ illustrates the additional complexity in the analysis to ensure faithfulness.

\section{Experiments}\label{sec:expt}
We conducted experiments to validate our theoretical findings on CI testing and structure learning with \texttt{PC-tree} algorithm (detailed in Algorithm~\ref{alg:tree}). 
Specifically, we conducted a series of simulation studies for structure learning in Bernoulli, Gaussian, and nonparametric continuous distributions, corresponding to the examples provided in Sections~\ref{sec:param}-\ref{sec:nonparam}. We simulate poly-forest models for each distribution setting and apply \texttt{PC-tree} algorithm equipped with the CI tests $\psi^{Ber}$, $\psi^{Gauss}$, and $\psi^{NP}$ specified in Sections~\ref{sec:param}-\ref{sec:nonparam} to estimate the graph structure.

\begin{figure}[t]
    \centering
    \includegraphics[width=0.9\textwidth]{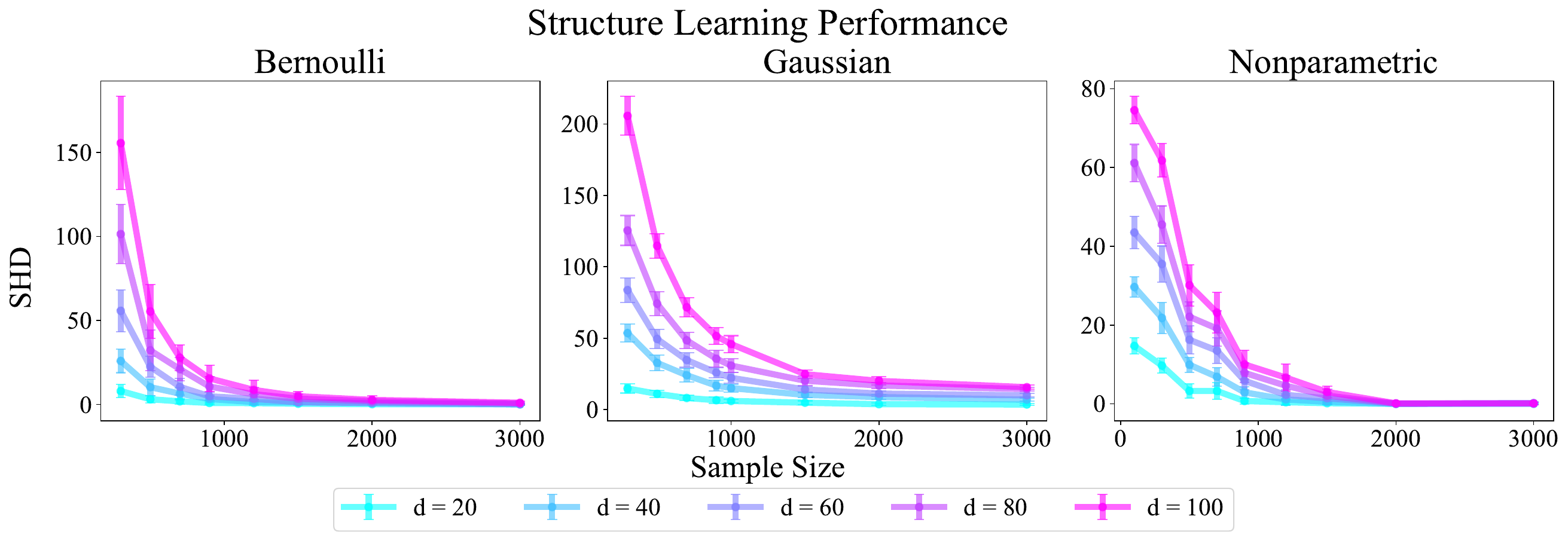}
    \caption{Structure Hamming distance (SHD) vs. sample size for poly-forest learning for Bernoulli, Gaussian, and nonparametric continuous distributions over varying number of nodes (indicated by colors). Error bars represent standard deviations. SHD consistently decreases toward zero as sample size increases across all experimental settings.}
    \label{fig:shd-comparison}
\end{figure}

\begin{figure}[t]
    \centering
    \includegraphics[width=.9\textwidth]{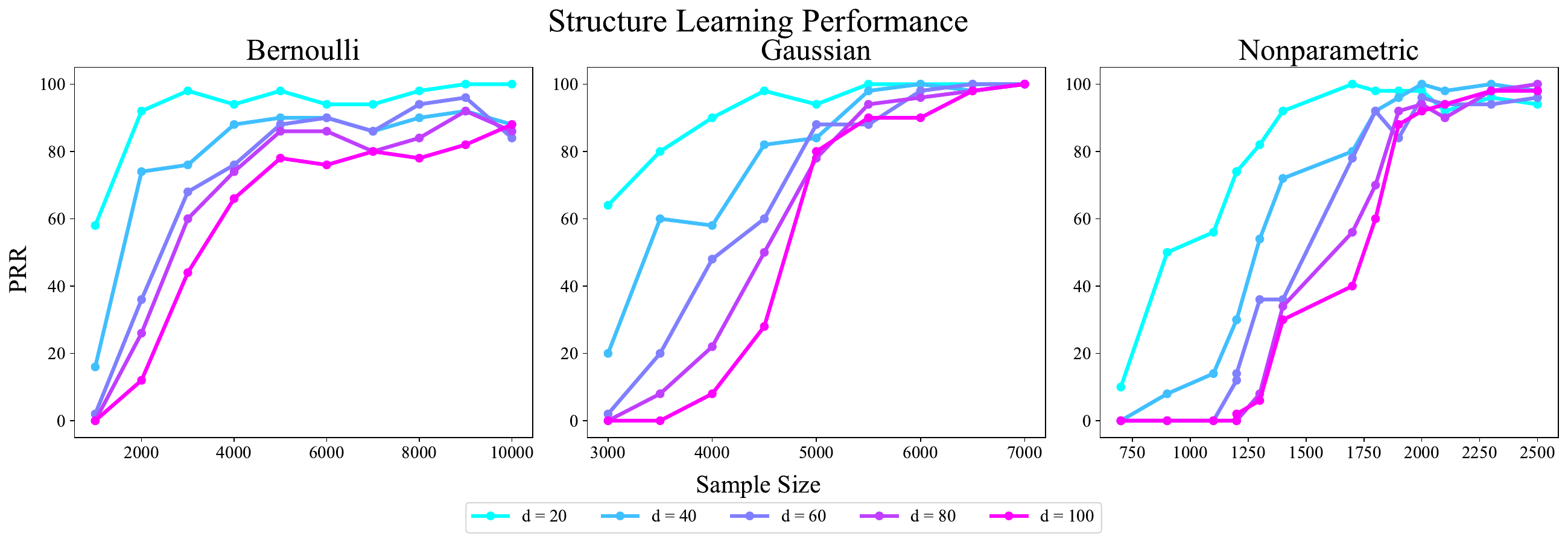}
    \caption{Precise Recovery Rate (PRR) vs. sample size for poly-forest learning for Bernoulli, Gaussian, and nonparametric continuous distributions over varying number of nodes (indicated by colors). PRR consistently increase toward 100\% as sample size increases across all experimental settings.}
    \label{fig:prr-comparison}
\end{figure}
In Figure~\ref{fig:shd-comparison}, we present the structure Hamming distance (SHD) between the estimated graph and the ground truth against sample size $n$ for various settings of number of nodes $\dd=\{20,40,60,80,100\}$. Each setting is averaged over $N=50$ replications.
The results show as the sample size increases, SHD consistently decreases for graphs with 20 to 100 nodes. A lower SHD means the output graph is closer to the truth, thus more accurate structure learning. 
The effect of dimensionality is illustrated by lines with different colors.
Overall, our simulations demonstrate that combining a powerful conditional independence test $\psi$ with \texttt{PC-tree} algorithm allows for consistent and accurate poly-forest learning.

In addition to the SHD, we evaluate the Precise Recovery Rate (PRR), defined as the proportion of replications in which the estimated graph exactly captures the true Markov equivalence class. PRR thus provides a stringent notion of success, complementing SHD by measuring exact structure recovery rather than  partial success.
As shown in Figure~\ref{fig:prr-comparison}, PRR consistently increases toward 100\% as the sample size grows across all three distributional settings, Bernoulli, Gaussian, and nonparametric continuous, indicating convergence of the \texttt{PC-tree} algorithm to the correct structure. 

As expected, larger graphs require more samples to attain high PRR; nevertheless, near-perfect recovery is achieved uniformly across all dimensions as $n$ increases.
The convergence behavior differs across distributional regimes. While parametric models (Bernoulli and Gaussian) achieve high PRR with relatively fewer samples, the nonparametric model requires substantially more samples to reach comparable recovery rates, reflecting the intrinsic difficulty induced from nonparametric conditional independence testing compared to the parametric ones predicted by our theory.

Additional experiments, details on implementations and how the data are simulated can be found in Appendix~\ref{app:expt}.

\section{Conclusion}\label{sec:concl}
In this paper, we study the minimax optimality of structure learning and conditional independence testing. 
We make their intuitive connection rigorous and quantitative
by showing the optimality conditions for CI testing translate directly into the optimality of poly-forest learning, which can be achieved by an efficient constraint-based algorithm with the optimal CI test as input. 
The generic theoretical results are demonstrated using three representative distribution families. This finding highlights the central role of CI testing in structure learning in the view of statistical sample efficiency.

One interesting direction for future research is to extend the results beyond poly-forests to general DAGs. In such settings, constraint-based methods, such as the PC algorithm, are applicable and rely solely on CI testing to operate. We anticipate that a similar statistical connection between CI testing and structure learning can be established for general DAGs, though additional structural parameters, such as the maximum in-degree, are likely to play a key role in characterizing the corresponding optimal sample complexity.

\clearpage

\appendix

\begin{algorithm}[h]
\caption{\texttt{PC-Tree} algorithm}\label{alg:tree}
\textbf{Input:} $n$ i.i.d. samples $\{X^{(i)}_1,\ldots,X^{(i)}_d\}_{i=1}^n$, CI test $\psi$ as function of data;
\begin{enumerate}
    \item Let $\widehat{E} = \emptyset$.
    \item For each pair $(j,k)$, $0\le j < k \le d$:
    \begin{enumerate}
        \item For all $\ell \in [d]\cup\{\emptyset\}\setminus \{j,k\}$:
        \begin{enumerate}
            \item Test $H_0:X_j\indep X_k\given X_\ell$ vs. $H_1:X_j\not\indep X_k\given X_\ell$ using $\psi$, store the results.
        \end{enumerate}
        \item If all tests reject, then $\widehat{E}\leftarrow \widehat{E}\cup \{j-k\}$.
        \item Else (if some test accepts), let $S(j,k) = \{\ell\in [d]\cup\{\emptyset\}\setminus \{j,k\}: X_j \indep X_k\given X_\ell\}$.
    \end{enumerate}
\end{enumerate}
\textbf{Output:} $\widehat{G}= ([d],\widehat{E})$, separation set $S$.
\end{algorithm}

\begin{algorithm}[h]
\caption{\textsc{Orient} algorithm}\label{alg:orient}
\textbf{Input:} Skeleton $\widehat{G}$, separation sets $S$\\
\textbf{Output:} CPDAG $\widehat{\mec}$.
\begin{enumerate}
    \item For all pairs of nonadjacent nodes $j,k$ with common neighbour $\ell$:
    \begin{enumerate}
        \item If $\ell\not\in S(j,k)$, then orient $j-\ell-k$ in $\widehat{G}$ by $j\rightarrow\ell\leftarrow k$
    \end{enumerate}
    \item In the resulting PDAG $\widehat{G}$, orient as many as possible undirected edges by applying following rules:
    \begin{itemize}
        \item \textbf{R1} Orient $k-\ell$ into $k\rightarrow \ell$ whenever there is an arrow $j\rightarrow k$ such that $j$ and $\ell$ are not adjacent
        \item \textbf{R2} Orient $j-k$ into $j\rightarrow k$ whenever there is a chain $j\rightarrow \ell \rightarrow k$
        \item \textbf{R3} Orient $j-k$ into $j\rightarrow k$ whenever there are two chains $j- \ell\rightarrow k$ and $j - i \rightarrow k$ such that $\ell$ and $i$ are not adjacent
        \item \textbf{R4} Orient $j-k$ into $j\rightarrow k$ whenever there are two chains $j- \ell\rightarrow i$ and $\ell - i \rightarrow k$ such that $\ell$ and $i$ are not adjacent
    \end{itemize}
    \item Return $\widehat{G}$ as $\widehat{\mec}$.
\end{enumerate}
\end{algorithm}

\section{Details of \texttt{PC-tree} algorithm}\label{app:pctree}
Introduced in \citet{wang2024optimal}, \texttt{PC-tree} algorithm (Algorithm~\ref{alg:tree}) is a modification to the classical PC algorithm and tailored for learning poly-forests/trees. Generically, it takes observational data along with a valid CI test as input, and outputs the skeleton with a separation set. The estimated skeleton will be further oriented by rules specified in Algorithm~\ref{alg:orient} using the separation set.

\texttt{PC-tree} conducts CI tests to determine the presence of edge between any two nodes. To achieve this, it only tests marginal independence and conditional independence given only one other node, rather than considering all possible conditioning sets as in the classical PC algorithm.
Therefore, \texttt{PC-tree} only invokes approximately
\begin{align*}
    \binom{\dd}{2} \times \Big[1 + (d-2)\Big] \asymp \dd^3
\end{align*}
times of CI test $\psi$, thereby is efficient. Since poly-forests are effectively concatenation of poly-trees, \texttt{PC-tree} is also consistent for learning poly-forest structures. As long as the input CI test $\psi$ is valid and well controls the type-I and type-II errors, the consistency of the algorithm applies to general distributions.

\section{Details of applications}\label{app:details}
In this appendix, we detail the formal definitions of the model class considered in each application (Sections~\ref{sec:param}-\ref{sec:nonparam}). Specifically, we specify $\mathcal{P}$ and $\dpdm$, thereby $\hone$ and $\hzero$ will follow, for the exampled distributions.

\subsection{Bernoulli distribution (Section~\ref{sec:param:bern})}\label{app:details:bern}
We start by specifying the model class $\mathcal{P}$.
Consider all multivariate Bernoulli distributions with dimension being three. They are parametrized by joint probability mass function $p(x,y,z)$:
\begin{align*}
    \mathcal{P}^{Ber} = \bigg\{ & p(x,y,z): \\
    & (X,Y,Z)\sim \prob(X=x, Y=y, Z=z)= p(x,y,z),  \\
    & (x,y,z)\in\{0,1\}^3, \sum_{x,y,z}p(x,y,z)=1 \bigg\} \,.
\end{align*}
We measure dependence using the total variation distance to the product of its marginals:
\begin{align*}
    \dpdm^{Ber}(X,Y\given Z) = \sum_z p(z)\sum_{x,y}|p(x,y\given z) -p(x\given z)p(y\given z)| \,.
\end{align*}
Then $\hzero^{Ber}$ and $\hone^{Ber}$ contain all these Bernoulli distributions that are conditionally independent and dependent respectively:
\begin{align*}
    \hzero^{Ber} = \{p\in \mathcal{P}^{Ber}: X\indep Y\given Z\}\,, \quad \hone^{Ber} = \{p\in \mathcal{P}^{Ber}: \dpdm^{Ber}(X,Y\given Z) \ge r\}
\end{align*}
Having defined the elements of CI testing problem $\ciclass^{Ber}: = \ciclass(\mathcal{P}^{Ber},\dpdm^{Ber},n)$, the Bernoulli poly-forest learning problem $\pfclass^{Ber}: = \pfclass(\mathcal{P}^{Ber},\dpdm^{Ber},c)$ contains all multivariate Bernoulli distributions with dimension $\dd$ and Markov and $c$-strong tree-faithful to some poly-forest $G\in \pltr$:
\begin{align*}
    (X_1,X_2\ldots,X_\dd) \sim \prob(X_1=x_1, X_2=x_2,\ldots, X_\dd=x_\dd), \quad (x_1,x_2\ldots,x_\dd)\in\{0,1\}^\dd\,.
\end{align*}

\subsection{Gaussian distribution (Section~\ref{sec:param:gauss})}\label{app:details:gauss}
Consider all multivariate Gaussian distributions with dimension being three:
\begin{align*}
    \mathcal{P}^{Gauss} = \bigg\{ p(X,Y,Z): (X,Y,Z) \sim \mathcal{N}(\mathbf{0}_3, \Sigma) , \Sigma\in \mathbb{S}_{++}^3 \bigg\} \,.
\end{align*}
We measure dependence using the partial correlation:
\begin{align}\label{eq:gauss:m}
    \dpdm^{Gauss}(X,Y\given Z) = \frac{|\cov(X,Y\given Z)|}{\sqrt{\var(X\given Z)\var(Y\given Z)}} \,.
\end{align}
Then $\hzero^{Gauss}$ and $\hone^{Gauss}$ contain all these Gaussian distributions that are conditionally independent and dependent respectively.
\begin{align*}
    \hzero^{Gauss} = \{p\in \mathcal{P}^{Gauss}: X\indep Y\given Z\}\,, \quad \hone^{Gauss} = \{p\in \mathcal{P}^{Gauss}: \dpdm^{Gauss}(X,Y\given Z) \ge r\}
\end{align*}
Consequently, the Gaussian CI testing problem $\ciclass^{Gauss}: = \ciclass(\mathcal{P}^{Gauss},\dpdm^{Gauss},n)$ is defined. Meanwhile, it gives the Gaussian poly-forest learning problem $\pfclass^{Gauss}: = \pfclass(\mathcal{P}^{Gauss},\dpdm^{Gauss},c)$, which include all multivariate Gaussian distributions with dimension $\dd$ and Markov and $c$-strong tree-faithful to some poly-forest $G\in \pltr$:
\begin{align*}
    (X_1,X_2\ldots,X_\dd) \sim \mathcal{N}(\mathbf{0}_\dd, \Sigma) , \Sigma\in \mathbb{S}_{++}^\dd\,.
\end{align*}
Similar to $\psi^{Ber}$, we construct a test by thresholding the sample partial correlation:
\begin{align}\label{eq:gauss:test}
\begin{aligned}
    \widehat{\dpdm}^{Gauss}(X,Y\given Z) & =  \frac{|\widehat{\Sigma}_{XY} - \widehat{\Sigma}_{XZ}\widehat{\Sigma}_{ZZ}^{-1}\widehat{\Sigma}_{ZY}|}{\sqrt{(\widehat{\Sigma}_{XX}-\widehat{\Sigma}_{XZ}\widehat{\Sigma}_{ZZ}^{-1}\widehat{\Sigma}_{ZX})(\widehat{\Sigma}_{YY} - \widehat{\Sigma}_{YZ}\widehat{\Sigma}_{ZZ}^{-1}\widehat{\Sigma}_{ZY})}}\\
    \psi^{Gauss} & = \mathbbm{1}\{\widehat{\dpdm}^{Gauss}(X,y\given Z) \ge c/2\} \,,
\end{aligned}
\end{align}
where $\widehat{\Sigma} =  \frac{1}{n}\sum_{i=1}^n X^{(i)}{X^{(i)}}\T$ is the sample covariance matrix. 

\subsection{Nonparametric continuous distribution (Section~\ref{sec:nonparam})}\label{app:details:np}
We follow the CI testing framework in~\citet{neykov2021minimax}.
Consider all continuous distributions over $[0,1]^3$ that admit continuous densities $p(X,Y,Z)$. 
In addition, we impose following two smoothness condition on the densities. 
\begin{definition}[Lipschitzness]\label{defn:lip}
    For some constant $L_1>0$, 
    \begin{itemize}
        \item if $X\indep Y\given Z$, then $\|p_{X\given Z=z} - p_{X\given Z=z'}\|_1 \le L_1 |z - z'|$ and $\|p_{Y\given Z=z} - p_{Y\given Z=z'}\|_1 \le L_1 |z - z'|$.
        \item if $X\not\indep Y\given Z$, then $\|p_{X,Y\given Z=z} - p_{X,Y\given Z=z'}\|_1 \le L_1 |z - z'|$.
    \end{itemize}
\end{definition}
\begin{definition}[$s$-H\"older smoothness]\label{defn:holder}
    For some $s>0$, let $\lfloor s \rfloor$ be the maximum integer smaller than $s$. For some constant $L_2>0$ and all $x,x',y,y',z\in[0,1]$,
    \begin{align*}
        \sup_{t\le \lfloor s \rfloor}\bigg|\frac{\partial^t}{\partial x^t}\frac{\partial^{\lfloor s \rfloor-t}}{\partial y^{\lfloor s \rfloor-t}}p(x,y\given z) - \frac{\partial^t}{\partial x^t}\frac{\partial^{\lfloor s \rfloor-t}}{\partial y^{\lfloor s \rfloor-t}}p(x',y'\given z)\bigg| \le L_2\sqrt{(x-x')^2 + (y-y')^2}^{s-\lfloor s \rfloor}
    \end{align*}
    and 
    \begin{align*}
        \sup_{t\le \lfloor s \rfloor}\bigg|\frac{\partial^t}{\partial x^t}\frac{\partial^{\lfloor s \rfloor-t}}{\partial y^{\lfloor s \rfloor-t}}p(x,y\given z)\bigg| \le L_2 \,.
    \end{align*}
\end{definition}
\noindent
The Lipschitzness is used to characterize the smoothness with respect to the conditioning variable $Z$, while the H\"older smoothness is required for the conditional densities with the conditioning variable fixed, and particularly for the (conditionally) dependent variable pairs. 
Denote this class of distributions as $\mathcal{P}^{NP}$. As specified in the main paper, we measure the dependence using
\begin{align*}
    \dpdm^{NP}(X,Y\given Z) = \inf_{q\in \mathcal{Q}} \|p - q\|_1
\end{align*}
which is the distance to the closest (conditionally) independent distributions, and should be large enough such that the dependent distributions can be distinguished from the independent ones \citep{shah2020hardness}.
Therefore, the CI testing class is defined $\ciclass^{NP}:=\ciclass(\mathcal{P}^{NP},\dpdm^{NP},n)$, with the null and alternative hypotheses being
\begin{align*}
    \hzero^{NP} &= \{p\in \mathcal{P}^{NP}: X\indep Y\given Z \AND \text{satisfies Lipschitzness}\} \\ 
    \hone^{NP} &= \{p\in \mathcal{P}^{NP}: \dpdm^{NP}(X,Y\given Z) \ge r \AND \text{satisfies Lipschitzness and $s$-H\"older smoothness}\} \,.
\end{align*}
Correspondingly, the nonparametric poly-forest learning problem is defined, which includes all continuous distributions supported on $[0,1]^\dd$ and Markov and $c$-strong tree-faithful to some poly-forest $G\in \pltr$, denoted as $\pfclass^{NP}: = \pfclass(\mathcal{P}^{NP},\dpdm^{NP},c)$.

\section{Proof of Theorem~\ref{thm:main}}\label{app:main}
We provide the full version of Theorem~\ref{thm:main} below, followed by the discussion and proof.
\begin{theorem}\label{thm:main:full}
    Given a conditional independence testing problem $\ciclass(\mathcal{P},\dpdm,n)$, if for some constants $A$ and $A'$ independent with sample size $n$, 
    \begin{enumerate}[label=(C\arabic*)]
        \item There exists a test $\psi$ such that when $c\ge A\times  n^{-1/\alpha}$, it holds that
        \begin{align*}
            \sup_{p\in \hzero}\E_p [\psi] + \sup_{p\in\hone}\E_p[1-\psi] \le \frac{1}{2} \,;
        \end{align*} 
        \label{cond:ub}
        \item There exist $p_0\in\hzero$ and $p_1\in \hone$ such that $\mathbf{KL}(p_1 \|p_0)\le A'\times c^{\alpha}$; \label{cond:lb}
    \end{enumerate}
    then the minimax testing radius of $\ciclass(\mathcal{P},\dpdm,n)$ is $r_n\asymp n^{-1/\alpha}$, and $\psi$ is optimal for $\ciclass(\mathcal{P},\dpdm,n)$.
    In addition, if 
    \begin{enumerate}[label=(C\arabic*)]
        \setcounter{enumi}{2}
        \item $p_0$ and $p_1$ are Markov and $c$-strong tree-faithful to some poly-forests of three nodes; \label{cond:graph}
    \end{enumerate}
    then the optimal sample complexity of learning $\pfclass(\mathcal{P},\dpdm,c)$ is
    \begin{align*}
        n \asymp \frac{\log\dd}{c^\alpha} \,,
    \end{align*}
    which is achieved by \texttt{PC-tree} with $\psi$.
\end{theorem}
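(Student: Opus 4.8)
The plan is to prove the three conclusions of Theorem~\ref{thm:main:full} in order: the minimax testing radius of $\ciclass(\mathcal{P},\dpdm,c)$, then a matching upper and lower bound on $n$ for $\pfclass(\mathcal{P},\dpdm,c)$. The testing radius is short. The upper direction $c_n\lesssim n^{-1/\alpha}$ is immediate from \ref{cond:ub}: with $c\ge An^{-1/\alpha}$ the test $\psi$ has total error $\le 1/2$, and a constant number of independent repetitions followed by a majority vote (legitimate once the error is strictly below $1/2$, which we may arrange by enlarging $A$) forces the error below $1/10$. The matching lower direction $c_n\gtrsim n^{-1/\alpha}$ is Le Cam's two-point method applied to the pair $p_0,p_1$ of \ref{cond:lb}: tensorizing, $\mathbf{KL}(p_1^{\otimes n}\|p_0^{\otimes n})=n\,\mathbf{KL}(p_1\|p_0)\le A'nc^\alpha$, so Pinsker gives $\mathrm{TV}(p_0^{\otimes n},p_1^{\otimes n})\le\sqrt{A'nc^\alpha/2}$, hence any test has total error at least $1-\sqrt{A'nc^\alpha/2}$, which exceeds $1/10$ as soon as $c^\alpha\le c_0/(A'n)$ for a small absolute constant $c_0$, i.e.\ $c\lesssim n^{-1/\alpha}$. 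Therefore $c_n\asymp n^{-1/\alpha}$, attained by $\psi$.

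For the upper bound on learning $\pfclass(\mathcal{P},\dpdm,c)$ I would run \texttt{PC-tree} with $\psi$. By Definition~\ref{defn:problem:plfr}, every triple $(j,k,\ell)$ that \texttt{PC-tree} tests has marginal $p(X_j,X_k,X_\ell)\in\hzero\cup\hone$, so deciding that query is precisely an instance of the testing problem controlled by \ref{cond:ub}. Amplifying $\psi$ --- a majority vote over $O(\log(d/\delta))$ disjoint sub-samples, or equivalently exploiting the $\exp(-\Omega(nc^\alpha))$ error decay typical of a minimax-optimal test --- makes each individual query wrong with probability at most $\delta/d^3$ once $n\gtrsim c^{-\alpha}\log(d/\delta)$. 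Since \texttt{PC-tree} issues $O(d^3)$ queries, a union bound makes all of them correct with probability $\ge 1-\delta$; on that event the consistency of \texttt{PC-tree} for poly-forests (Appendix~\ref{app:pctree}) --- which uses only that two non-adjacent nodes of a poly-forest are $d$-separated by $\emptyset$ or by a single node, together with the orientation rules of Algorithm~\ref{alg:orient} --- yields $\widehat G=\mec$. Treating $\delta$ as a constant gives $n\asymp\log d/c^\alpha$ samples suffice.

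For the lower bound I would apply a generalized Fano argument to a family of $M=\lfloor d/3\rfloor$ distributions. Since $p_1\in\hone$ is faithful to a three-node poly-forest, that poly-forest contains an edge $\{u,v\}$, and by $c$-strong tree-faithfulness (Definition~\ref{defn:strongtreefaith}) the pair $(X_u,X_v)$ has dependence $\ge c$ under $p_1$ and when further conditioned on the remaining variable; write $\pi_1$ for this two-variable marginal of $p_1$. Partition $[d]$ into triples $T_1,\dots,T_M$, take $p_0$ Markov to the empty three-node graph (the form of the null in all our examples), let $Q$ be the $d$-dimensional product distribution obtained by copying $p_0$'s marginals, and let $H_k$ agree with $Q$ except that two fixed nodes of $T_k$ jointly follow $\pi_1$. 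Then $H_k$ is Markov and $c$-strong tree-faithful to the single-edge poly-forest $G_k$; every triple marginal of $H_k$ is either fully independent or consists of the strong pair together with a variable independent of it, hence lies in $\hzero\cup\hone$; so $H_k\in\pfclass(\mathcal{P},\dpdm,c)$, and the $G_k$ have pairwise distinct CPDAGs. Consequently an estimator recovering $\mec$ with probability $\ge 1-\delta$ uniformly identifies $k$, so Fano's inequality together with the variational bound $I(k;\text{data})\le n\max_k\mathbf{KL}(H_k\|Q)=n\,\mathbf{KL}(\pi_1\|\pi_0)\le n\,\mathbf{KL}(p_1\|p_0)\le A'nc^\alpha$ (data processing; $\pi_0$ the product marginal of $p_0$ on the two coordinates) forces $A'nc^\alpha+\log 2\ge(1-\delta)\log M$, i.e.\ $n\gtrsim\log d/c^\alpha$.

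I expect the lower-bound construction to be the main obstacle. The definition of $\pfclass(\mathcal{P},\dpdm,c)$ requires \emph{every} triple marginal to be decisive (dependence either $0$ or $\ge c$), whereas a generic $p_1$ need not be: in the Bernoulli and Gaussian examples the chain $Z\to X\to Y$ has $\dpdm(Z;Y)\asymp c^2\in(0,c)$, so one cannot simply scatter copies of the full three-node $p_1$ over the graph. Embedding only the provably-strong \emph{edge} of $p_1$ sidesteps this while keeping $\mathbf{KL}(H_k\|Q)=O(c^\alpha)$, which is exactly what balances against $\log M\asymp\log d$ in the Fano bound. A minor additional point, used in the upper bound, is upgrading the constant-error guarantee \ref{cond:ub} to the per-query error $\delta/d^3$ demanded by the union bound; this is routine once $\psi$'s error is below $1/2$ (hence amplifiable) or, as in all our examples, decays exponentially in $nc^\alpha$.
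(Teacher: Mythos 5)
Your proposal matches the paper's overall architecture — Le Cam's two-point bound for the testing lower bound, amplification of $\psi$ to exponential error plus a union bound over the $O(d^3)$ PC-tree queries for the learning upper bound, and an information-theoretic packing argument for the learning lower bound — but diverges in the lower-bound construction in a way worth flagging. The paper builds $P_k$ by placing the full three-node instance $p_1$ on the $k$-th triple and $p_0$ elsewhere, and invokes Tsybakov's method with $\mathbf{KL}(P_k\|P_0)=\mathbf{KL}(p_1\|p_0)$; you instead extract a single provably-strong edge $\pi_1$ from $p_1$, embed that on one triple, and invoke Fano with $\mathbf{KL}(\pi_1\|\pi_0)\le\mathbf{KL}(p_1\|p_0)$ via data processing. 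Both yield $n\gtrsim\log d/c^\alpha$, but your construction is more careful on the point you explicitly flag: in the paper's Bernoulli and Gaussian examples $p_1$ is a chain $Z\to X\to Y$, so a cross-triple $(Z_k,Y_k,W)$ with $W$ in a different block has $\dpdm(Z_k;Y_k\given W)=\dpdm(Z_k;Y_k)\asymp c^2\notin\{0\}\cup[c,\infty)$, which sits uncomfortably with the requirement in Definition~\ref{defn:problem:plfr} that every triplet marginal land in $\hzero\cup\hone$. The paper's step ``$P_k$ Markov and $c$-strong tree-faithful $\Rightarrow P_k\in\pfclass$'' does not address this, whereas your single-edge packing sidesteps it entirely (and would also let the paper drop the v-structure workaround it uses only in the nonparametric case). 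Your observation that the majority-vote amplification genuinely needs the per-test error \emph{strictly} below $1/2$ (not merely $\le 1/2$ as stated in \ref{cond:ub} and in the paper's Proposition~\ref{prop:median}) is likewise a correct sharpening, harmless in practice because the optimal $\psi$ attains error $\le 1/10$.
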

The first two conditions assumed in Theorem~\ref{thm:main:full} are standard steps to study minimax optimality, corresponding to upper and lower bounds of testing radius. It is typical in literature to verify these two conditions when deriving the optimal testing radius:
\begin{itemize}
    \item \ref{cond:ub} requires finding a powerful test to distinguish independent and dependent instances with sufficiently large signal. 
    \item \ref{cond:lb} looks for two instances from $\hzero$ and $\hone$ that are hard to distinguished. Typically, $p_0$ is a ``flat'' distribution, e.g. uniform distribution or $\text{Bern}(0.5)$, while $p_1$ is constructed to be a perturbation to the flat distribution with sufficiently large signal $c$ but small deviation to being independent.
    \item With above two conditions, to conclude optimal poly-forest learning, \ref{cond:graph} requires the instances to be generated by poly-forest models, ensuring their validity in the context of structure learning. 
\end{itemize}
\begin{proof}
    Under the given conditions, we prove the optimalities for CI testing and poly-forest learning.
    
    \bigskip
    \noindent
    \textbf{Optimal CI testing} \\
    In the context of CI testing, \ref{cond:lb} reads as $\mathbf{KL}(p_1 \|p_0)\le A'\times r^{\alpha}$ (with a little abuse of notation to replace $c$ by $r$).
    
    The existence of test $\psi$ implies a upper bound of testing radius $r_n\lesssim n^{-1/\alpha}$. The lower bound is given by Le Cam's two point method: by the existence $p_0\in\hzero,p_1\in\hone$, we have
    \begin{align*}
       \inf_{\psi}\bigg\{\sup_{p\in \hzero}\E_p [\psi] + \sup_{p\in\hone}\E_p[1-\psi]\bigg\} \ge \frac{1}{2}\Big(1-n\mathbf{TV}(p_1\|p_0)\Big)\,,
    \end{align*}
    and $\mathbf{TV}(p_1\| p_0)\le \mathbf{KL}(p_1\| p_0)\le A'\times r^\alpha$. This requires the testing radius $nr_n^\alpha\gtrsim 1$, which yields $r_n\gtrsim n^{-1/\alpha}$.
    
    \bigskip
    \noindent
    \textbf{Optimal poly-forest learning} \\
    \emph{Regarding the upper bound}, we start with applying median trick \citep{motwani1995randomized} to obtain an exponential decay of error probability. 
    The idea is to divide the full sample into $K$ folds and apply the statistical test on each sub-sample. Take the majority vote of all these tests as the final output. Then the final output makes mistake only when half of the tests make mistake, whose probability can easily computed and bounded in an exponential way and serves our goal when $K$ is appropriately chosen.
    \begin{proposition}\label{prop:median}
        Suppose there exists a statistical test $\psi$ such that if the sample size $n\gtrsim N$, then
        \begin{align*}
            \prob(\psi\text{ is incorrect}) \le 1/2\,,
        \end{align*}
        then there exists a CI test $\psi'$ such that under the same condition,
        \begin{align*}
            \prob(\psi'\text{ is incorrect}) \le \exp\Big( - C_0 n/N\Big)\,,
        \end{align*}
        for some constant $C_0$ independent with $n$.
    \end{proposition}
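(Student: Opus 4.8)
The plan is to prove Proposition~\ref{prop:median}, a standard amplification ("median trick" / "boosting the confidence") argument, which here reduces to a majority-vote over independent copies of the test. I would first set up the sub-sampling: given $n$ i.i.d.\ observations, split them into $K$ disjoint folds each of size $\lfloor n/K \rfloor \gtrsim N$ (I will choose $K \asymp n/N$ at the end), and run the given test $\psi$ on each fold to obtain i.i.d.\ outputs $\psi^{(1)}, \dots, \psi^{(K)}$. Define $\psi'$ to be the majority vote: $\psi' = \mathbbm{1}\{\sum_{k=1}^K \mathbbm{1}\{\psi^{(k)}\text{ correct}\} < K/2\}$, i.e.\ $\psi'$ outputs the value returned by more than half of the folds. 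Since each fold has size at least $N$, the hypothesis of the proposition gives $\prob(\psi^{(k)}\text{ incorrect}) \le 1/2$; to get strict control I would first note that by choosing the constant in $n \gtrsim N$ slightly larger we may assume $\prob(\psi^{(k)}\text{ incorrect}) \le p$ for some fixed $p < 1/2$ (e.g.\ $p = 1/4$), or alternatively absorb this into the constant via a two-stage split — this is the one mild technical point worth flagging.

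The core step is then a Chernoff/Hoeffding bound on the Binomial tail. Let $B = \sum_{k=1}^K \mathbbm{1}\{\psi^{(k)}\text{ incorrect}\}$; the $K$ indicators are independent with success probability at most $p \le 1/4 < 1/2$, so $\E[B] \le pK$. The event $\{\psi' \text{ incorrect}\}$ is contained in $\{B \ge K/2\}$, and Hoeffding's inequality gives
\begin{align*}
    \prob\big(B \ge K/2\big) \le \prob\big(B - \E[B] \ge (1/2 - p)K\big) \le \exp\!\big(-2(1/2-p)^2 K\big)\,.
\end{align*}
Setting $K = \lfloor n/N \rfloor$ (equivalently, each fold has $\Theta(N)$ samples) yields $\prob(\psi'\text{ incorrect}) \le \exp(-C_0 n/N)$ with $C_0 = 2(1/2-p)^2 / 2$ (the extra factor accounting for the floor), which is exactly the claimed bound.

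Finally I would remark that the construction preserves validity as a CI test: $\psi'$ is still a measurable function of the data into $\{0,1\}$, and since each $\psi^{(k)}$ separately controls type-I and type-II error below $1/2$ on the appropriate hypothesis class, the majority vote controls the \emph{same} error exponentially. In particular, applied inside \texttt{PC-tree}, replacing $\psi$ by $\psi'$ costs only a constant factor in sample size per test while making each of the $\asymp d^3$ tests fail with probability $\exp(-C_0 n/N)$; a union bound over all tests then drives the overall failure probability below $\delta$ once $n \gtrsim N \log d$, which is precisely the mechanism that produces the $\log d$ factor in the sample complexity of Theorem~\ref{thm:main:full}. The only real "obstacle" is cosmetic — tracking constants so that the $1/2$ in the hypothesis is genuinely bounded away from $1/2$ before invoking the concentration bound — and this is handled by the preliminary two-stage split noted above; no deeper difficulty arises.
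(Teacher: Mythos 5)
Your proof follows essentially the same route as the paper: split the sample into $K \asymp n/N$ disjoint folds, run $\psi$ on each, output the majority vote, and bound the probability that at least half the folds err via a binomial tail bound. The paper asserts $\prob(\text{at least half wrong}) \le 2^{-K/2}$ directly, which is cruder (and in fact not justified when the per-fold error is exactly $1/2$); your use of Hoeffding with the explicit $(1/2-p)^2$ exponent is cleaner and makes the dependence on the per-fold error transparent.

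Your flag about the hypothesis $\prob(\psi\text{ incorrect}) \le 1/2$ being borderline is a genuine and correct observation: a Binomial$(K, 1/2)$ puts probability about $1/2$ (not $2^{-\Omega(K)}$) on exceeding $K/2$, so amplification requires the per-fold error to be strictly bounded away from $1/2$. The paper sweeps this under the rug by setting $K = C'\,n/N$ with $C'$ ``large enough,'' which implicitly pushes the per-fold sample size past $N$ by a constant factor so that the per-fold error drops below some $p < 1/2$; your proposed fix (absorbing into the constant in $n \gtrsim N$, or a two-stage split) accomplishes the same thing explicitly. Net: same decomposition and key idea, with your write-up being more rigorous on the one step the paper glosses over.
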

    \noindent
    Applying Proposition~\ref{prop:median}, we can derive an error probability bound for $\psi$ with modification. For simplicity, we will stick with $\psi$ with a little abuse of notation. Assuming $n\gtrsim c^{-\alpha}$, we have for some constant $C_0$,
    \begin{align*}
        \prob(\psi\text{ is incorrect}) \le \exp\Big( - C_0 n c^\alpha\Big) \,.
    \end{align*}
    With this error bound in hand, the proof proceeds by following the proof of Theorem~4.3 in \citet{wang2024optimal} with minor modifications: 1) replacing poly-tree with poly-forest; 2) replacing correlation CI testing with $\psi$.
    This completes the proof of upper bound, and shows \texttt{PC-tree} with $\psi$ as CI tester achieves the upper bound.
    
    \bigskip
    \noindent
    \emph{Regarding the lower bound}, without loss of generality, we assume $\dd$ divides $3$, otherwise proceed by setting the remaining one or two nodes to be isolated and follow $p_0(X)$.
    Group all the nodes into $\dd'=\dd/3$ clusters:
    \begin{align*}
        \{1,2,3\},\{4,5,6\},\cdots,\{3k+1,3k+2,3k+3\},\cdots, \{3\dd'+1,3\dd'+2,3\dd'+3\} \,.
    \end{align*}
    We are going to use the $p_1,p_0$ to parametrize the construction.
    Since $p_1$ and $p_0$ give different conditional independence statements, they are Markov and $c$-strong tree-faithful to two different poly-forests, which are denoted as $T_0,T_1$.
    
    Firstly, construct graph $G_0$ by stacking $\dd'$ many $T_0$'s together.
    Then for each $k\ge 1$, construct graph $G_k$ by stacking $\dd'-1$ many $T_0$'s and one $T_1$ together, and the subgraph $T_1$ is imposed on nodes $\{3k+1,3k+2,3k+3\}$, while $T_0$ is imposed on all remaining triplets.
    Under this construction, $\{G_k\}_{k\ge0}$ are poly-forests with (at least) $\dd'$ many disconnected subgraphs, and are distinct with each other.

    Now we consider distributions for each $k\ge 0$. Let $P_0 =p_0^{\otimes d'}$ and 
    \begin{align*}
        P_k(X) = p_0^{\otimes d'-1} \times P_k(X_{3k+1},X_{3k+2},X_{3k+3}) \quad \text{ with }\quad P_k(X_{3k+1},X_{3k+2},X_{3k+3}) = p_1\,.
    \end{align*}
    Since $p_0$ and $p_1$ are Markov and $c$-strong tree-faithful to $T_0$ and $T_1$ respectively, $P_k$ is Markov and $c$-strong tree-faithful to $G_k$ for all $k\ge 0$. Therefore, $\{P_k\}_{k\ge 0} \in \pfclass(\mathcal{P},\dpdm,c)$.

    We also going to apply Tsybakov's method (Corollary~\ref{coro:tsy}). Firstly, we have the size of the construction to be lower bounded
    \begin{align*}
        \log M = \log (\dd'+1) \ge \frac{1}{2}\log \dd \,.
    \end{align*}
    Then we upper bound the KL divergence between $P_k$ and $P_0$ for all $k\ge 1$:
    \begin{align*}
        \mathbf{KL}(P_k\|P_0) =\E_{P_k} \log \frac{P_k}{P_0} =\E_{P_k} \log \frac{p_1}{p_0} = \mathbf{KL}(p_1\|p_0) \le A'\times c^\alpha\,.
    \end{align*}
    Invoking Corollary~\ref{coro:tsy} completes the proof.
\end{proof}

\begin{proof}[Proof of Proposition~\ref{prop:median}]
    Divide the full sample into $K$ folds, which we will specify later, and apply $\psi_0$ for each of the sub-sample to get outputs $\psi^{(1)},\ldots,\psi^{(K)}$. Let 
    \begin{align*}
        \psi = \argmax_{t\in\{0,1\}} \sum_{k=1}^K \mathbbm{1}\{\psi^{(k)}=t\}
    \end{align*}
    be the majority vote of all the tests as final output. Therefore, the incorrectness of $\psi$ implies at least half of the sub-tests make mistakes. Suppose that $n/K \gtrsim N$, by the guarantee of $\psi_0$, we have
    \begin{align*}
        \prob(\psi\text{ is incorrect}) &\le \prob\Big(\text{half of }\{\psi^{(k)}\}_{k=1}^K\text{ is incorrect}\Big) \\
        & \le \frac{1}{2^{K/2}} = \exp\Big(- \frac{\log2}{2}\times K\Big)\,.
    \end{align*}
    Now set $K = C' n/N$ for constant $C'$ large enough such that $n/K \gtrsim N$ is satisfied, let $C_0=C'(\log 2) / 2$, we obtain
    \begin{align*}
        \prob(\psi\text{ is incorrect}) \le \exp\Big(- C_0 n/N\Big)
    \end{align*}
    as desired.
\end{proof}

\section{Proof of Theorem~\ref{thm:param:bern} (Bernoulli distribution)}\label{app:bern}
We start by showing the upper bound \ref{cond:ub} of $\psi^{Ber}$, then proceed to verify the validity of $p_0^{Ber}$ and $p_1^{Ber}$ and show they satisfy \ref{cond:lb} and \ref{cond:graph}.

\subsection{Proof of upper bound for Bernoulli distribution}\label{app:bern:ub}
\begin{proof}
We directly show $\psi^{Ber}$ satisfies an exponential bound. Notice that by construction, the concentration of $\widehat{\dpdm}$ implies the correctness of testing:
\begin{align*}
    |\widehat{\dpdm}^{Ber} - \dpdm^{Ber}|\le c/2 \implies \psi^{Ber} \text{ is correct.}
\end{align*}
We aim to show $|\widehat{\dpdm}^{Ber} - \dpdm^{Ber}|\le c/2$ holds with high probability. To achieve this, we employee the result below:
\begin{lemma}[\citet{devroye1983equivalence}, Lemma~3]
    Let $(X_1,X_2,\ldots,X_k)$ be a multinomial $(n,p_1,p_2,\ldots,p_k)$ random vector. Let $\widehat{p} = (X_1,X_2,\ldots, X_k) / n$ For all $\epsilon\in (0,1)$ and all $k$ satisfying $k/n\le \epsilon^2/20$, we have
    \begin{align*}
        \prob(\|\widehat{p}-p\|_1 > \epsilon )\le 3\exp(-n\epsilon^2/25) \,.
    \end{align*}
\end{lemma}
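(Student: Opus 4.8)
The plan is to prove this classical $\ell_1$ concentration bound by (i) reducing the $\ell_1$ deviation $\|\widehat p - p\|_1$ to a supremum of binomial deviations indexed by subsets of the $k$ cells, (ii) applying Hoeffding's inequality to each such binomial deviation, (iii) taking a union bound over all $2^k$ subsets, and (iv) using the hypothesis $k/n\le\epsilon^2/20$ to let the exponential decay dominate the combinatorial factor $2^k$.

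First I would record the identity
\begin{align*}
    \|\widehat p - p\|_1 \;=\; 2\sup_{A\subseteq[k]}\big(\widehat p(A) - p(A)\big)\,,
\end{align*}
where $\widehat p(A)=\sum_{i\in A}\widehat p_i$ and $p(A)=\sum_{i\in A}p_i$. This holds because, writing $A^\star=\{i:\widehat p_i\ge p_i\}$, the positive part $\sum_{i\in A^\star}(\widehat p_i-p_i)$ and the negative part $\sum_{i\notin A^\star}(p_i-\widehat p_i)$ both equal $\widehat p(A^\star)-p(A^\star)$ (using $\widehat p([k])=p([k])=1$), and $A^\star$ maximizes $A\mapsto\widehat p(A)-p(A)$. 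Hence
\begin{align*}
    \prob\big(\|\widehat p - p\|_1 > \epsilon\big) \;=\; \prob\Big(\exists\,A\subseteq[k]:\ \widehat p(A)-p(A)>\tfrac{\epsilon}{2}\Big) \;\le\; \sum_{A\subseteq[k]}\prob\big(\widehat p(A)-p(A)>\tfrac{\epsilon}{2}\big)\,.
\end{align*}
For a fixed $A$, $n\widehat p(A)=\sum_{j=1}^n\mathbbm{1}\{X^{(j)}\in A\}$ is a sum of $n$ i.i.d.\ Bernoulli$(p(A))$ variables valued in $[0,1]$, so Hoeffding's inequality gives $\prob(\widehat p(A)-p(A)>t)\le\exp(-2nt^2)$; with $t=\epsilon/2$ each summand is at most $\exp(-n\epsilon^2/2)$. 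Since there are $2^k$ subsets, $\prob(\|\widehat p-p\|_1>\epsilon)\le 2^k\exp(-n\epsilon^2/2)=\exp(k\log 2-n\epsilon^2/2)$, and invoking $k\le n\epsilon^2/20$ together with $\log 2<1$ yields $k\log 2-n\epsilon^2/2\le n\epsilon^2(\tfrac1{20}-\tfrac12)=-\tfrac{9}{20}n\epsilon^2\le-n\epsilon^2/25$, so $\prob(\|\widehat p-p\|_1>\epsilon)\le\exp(-n\epsilon^2/25)\le 3\exp(-n\epsilon^2/25)$, which is the claim (in fact with room to spare).

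The only care needed is the bookkeeping of constants so that $2^k$ is genuinely absorbed by the exponential; this is exactly what the hypothesis $k/n\le\epsilon^2/20$ buys, and the margin is comfortable, which is why the stated bound can afford the harmless prefactor $3$. An equally short alternative avoids the $2^k$ union bound: bound $\E\|\widehat p-p\|_1\le\sum_i\sqrt{\var(\widehat p_i)}\le\sqrt{k/n}$ by Cauchy--Schwarz, then apply the bounded-differences (McDiarmid) inequality to $(X^{(1)},\dots,X^{(n)})\mapsto\|\widehat p-p\|_1$, whose coordinatewise oscillation is at most $2/n$, obtaining concentration around the mean at scale $\exp(-nt^2/2)$; combining with $\sqrt{k/n}\le\epsilon/\sqrt{20}$ again gives a bound of the required form. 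Either way there is no substantive obstacle — this is a textbook concentration statement — and in the paper I would simply cite \citet{devroye1983equivalence} for the precise constants as stated.
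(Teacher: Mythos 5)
Your argument is correct. Note, however, that the paper does not prove this lemma at all: it is quoted verbatim from \citet{devroye1983equivalence} (Lemma~3) and used as a black box in the proof of the Bernoulli upper bound, so there is no in-paper proof to compare against. Your derivation is the standard one for this inequality (sometimes called the Bretagnolle--Huber--Carol bound): the identity $\|\widehat p-p\|_1=2\sup_{A\subseteq[k]}(\widehat p(A)-p(A))$, Hoeffding's inequality for each binomial $n\widehat p(A)\sim\mathrm{Bin}(n,p(A))$, and a union bound over the $2^k$ subsets, with the hypothesis $k/n\le\epsilon^2/20$ absorbing the factor $2^k$; the constant bookkeeping checks out ($\tfrac{1}{20}-\tfrac12=-\tfrac{9}{20}\le-\tfrac1{25}$), and in fact you obtain the bound without the prefactor $3$, i.e.\ a slightly stronger statement than quoted. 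Devroye's original proof proceeds differently (via the expectation bound $\E\|\widehat p-p\|_1\le\sqrt{k/n}$ plus a concentration step, which is essentially your sketched McDiarmid alternative and is where the specific condition $k/n\le\epsilon^2/20$ and the constant $25$ come from), but either route is a complete and valid proof of the statement as used in the paper.
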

Applied to our setup, we consider the concentration for the joint distribution $p_{XYZ}$, which can be viewed as a multinomial distribution with dimension $k=2^3=8$. Therefore, for $\epsilon$ such that $n\ge 160/\epsilon^2$, with probability at least $1- 3\exp(-n\epsilon^2/25)$, we have
\begin{align*}
    \|p_{XYZ} - \widehat{p}_{XYZ}\|_1:=\sum_{xyz}|p(x,y,z) - \widehat{p}(x,y,z)| \le \epsilon \,,
\end{align*}
which also implies the concentration of $p_{WZ}$ ($W\in\{X,Y\}$) and $p_Z$. To see this, take $W=X$ for example:
\begin{align*}
    \|\widehat{p}_{XZ}-p_{XZ}\|_1 & = \sum_{xz}|\widehat{p}(x,z) - p(x,z)| = \sum_{xz}|\sum_y\Big(\widehat{p}(x,y,z) - p(x,y,z)\Big)| \\
    & \le \sum_{xz}\sum_y |\widehat{p}(x,w,z) - p(x,w,z)| = \|p_{XYZ} - \widehat{p}_{XYZ}\|\le \epsilon\,.
\end{align*}
Similarly,
\begin{align*}
    \|\widehat{p}_{Z}-p_{Z}\|_1 & = \sum_{z}|\widehat{p}(z) - p(z)| = \sum_{z}|\sum_{xy}\Big(\widehat{p}(x,y,z) - p(x,y,z)\Big)| \\
    & \le \sum_{z}\sum_{xy} |\widehat{p}(x,w,z) - p(x,w,z)| = \|p_{XYZ} - \widehat{p}_{XYZ}\|\le \epsilon\,.
\end{align*}
Since the estimator $\widehat{\dpdm}^{Ber}$ involves the estimation of $p_{XY\given Z}$ and $p_{W\given Z}$ for $W=X$ or $Y$, we proceed to bound the error of them. For any $(x,y,z)\in\{0,1\}^3$,
\begin{align*}
    |\widehat{p}(x,y\given z) - p(x,y\given z)| & = \Big| \frac{\widehat{p}(x,y,z)}{\widehat{p}(z)} - \frac{p(x,y,z)}{p(z)} \Big| \\
    & = \Big| \frac{\widehat{p}(x,y,z)}{\widehat{p}(z)}  - \frac{\widehat{p}(x,y,z)}{p(z)} + \frac{\widehat{p}(x,y,z)}{p(z)} - \frac{p(x,y,z)}{p(z)} \Big|  \\
    & \le \widehat{p}(x,y,z) \Big| \frac{1}{\widehat{p}(z)} - \frac{1}{p(z)} \Big| + \frac{1}{p(z)} |\widehat{p}(x,y,z) - p(x,y,z)| \\
    & \le \widehat{p}(x,y,z) \frac{|\widehat{p}(z) - p(z)|}{p(z)\widehat{p}(z)} + \frac{1}{p(z)} |\widehat{p}(x,y,z) - p(x,y,z)|\\
    & = \frac{1}{p(z)} \Big(\widehat{p}(x,y\given z) |\widehat{p}(z) - p(z)| +  |\widehat{p}(x,y,z) - p(x,y,z)|\Big)\\
    & \le \frac{1}{p(z)} \Big( |\widehat{p}(z) - p(z)| +  |\widehat{p}(x,y,z) - p(x,y,z)|\Big) \,.
\end{align*}
Analogously, for $W=X$ or $Y$ and any $(w,z)\in\{0,1\}^2$
\begin{align*}
    |\widehat{p}(w\given z) - p(w\given z)| & = \Big| \frac{\widehat{p}(w,z)}{\widehat{p}(z)} - \frac{p(w,z)}{p(z)} \Big| \\
    & = \Big| \frac{\widehat{p}(w,z)}{\widehat{p}(z)}  - \frac{\widehat{p}(w,z)}{p(z)} + \frac{\widehat{p}(w,z)}{p(z)} - \frac{p(w,z)}{p(z)} \Big|  \\
    & \le \widehat{p}(w,z) \Big| \frac{1}{\widehat{p}(z)} - \frac{1}{p(z)} \Big| + \frac{1}{p(z)} |\widehat{p}(w,z) - p(w,z)| \\
    & \le \widehat{p}(w,z) \frac{|\widehat{p}(z) - p(z)|}{p(z)\widehat{p}(z)} + \frac{1}{p(z)} |\widehat{p}(w,z) - p(w,z)|\\
    & = \frac{1}{p(z)} \Big(\widehat{p}(w\given z) |\widehat{p}(z) - p(z)| +  |\widehat{p}(w,z) - p(w,z)|\Big)\\
    & \le \frac{1}{p(z)} \Big( |\widehat{p}(z) - p(z)| +  |\widehat{p}(w,z) - p(w,z)|\Big) \,.
\end{align*}
Denote $\widehat{p}(x,y,z)=p(x,y,z)+\delta_{xyz}$, $\widehat{p}(w,z) = p(w,z) + \delta_{wz}$, $\widehat{p}(z)=p(z)+\delta_z$, thus $\delta_{xyz},\delta_{wz}$ are bounded correspondingly.
Then we are ready to show the concentration of $\widehat{\dpdm}^{Ber}$ below.
\begin{align*}
    |\widehat{\dpdm}^{Ber} - \dpdm^{Ber}| & = \sum_z \bigg\{\widehat{p}(z)\sum_{xy}|\widehat{p}(x,y\given z) - \widehat{p}(x\given z) \widehat{p}(y\given z)| - p(z)\sum_{xy}|p(x,y\given z) - p(x\given z) p(y\given z)|\bigg\}\,,
\end{align*}
where 
\begin{align*}
    & \widehat{p}(z)\sum_{xy}|\widehat{p}(x,y\given z) - \widehat{p}(x\given z) \widehat{p}(y\given z)| \\
    =& \Big(p(z) +\delta_z\Big)\sum_{xy}\Big|\Big(p(x,y\given z)+\delta_{xyz}\Big) - \Big(p(x\given z) + \delta_{xz}\Big)\Big( p(y\given z) + \delta_{yz}\Big)\Big| \\
    \le & p(z)\sum_{xy}|p(x,y\given z) - p(x\given z) p(y\given z)| + p(z)\sum_{xy}\Big\{|\delta_{xyz}| + |\delta_{xz}\delta_{yz} + \delta_{xz}p(y\given z) + \delta_{yz}p(x\given z)|\Big\} \\
    & + \delta_z \sum_{xy}|\widehat{p}(x,y\given z) - \widehat{p}(x\given z) \widehat{p}(y\given z)| \\
    \le & p(z)\sum_{xy}|p(x,y\given z) - p(x\given z) p(y\given z)| + p(z)\sum_{xy}\Big\{ |\delta_{xyz}| + |\delta_{xz}\widehat{p}(y\given z) + \delta_{yz}p(x\given z)|\Big\} + 8\delta_z \\
    \le & p(z)\sum_{xy}|p(x,y\given z) - p(x\given z) p(y\given z)| + p(z)\sum_{xy}\Big\{ |\delta_{xyz}| + |\delta_{xz}| + |\delta_{yz}|\Big\} + 8\delta_z \,.
\end{align*}
Therefore,
\begin{align*}
    |\widehat{\dpdm}^{Ber} - \dpdm^{Ber}| & \le \sum_z \bigg(p(z)\sum_{xy}\Big\{ |\delta_{xyz}| + |\delta_{xz}| + |\delta_{yz}|\Big\} + 8\delta_z \bigg)\\
    & \le 4 \|p_Z-\widehat{p}_Z\|_1 + \|p_{XYZ}-\widehat{p}_{XYZ}\|_1 \\
    & \quad 4 \|p_Z-\widehat{p}_Z\|_1 + 2\|p_{XZ}-\widehat{p}_{XZ}\|_1 \\
    & \quad 4 \|p_Z-\widehat{p}_Z\|_1 + 2\|p_{YZ}-\widehat{p}_{YZ}\|_1 \\
    & \quad 8\|p_Z-\widehat{p}_Z\|_1 \\
    & = 20 \|p_Z-\widehat{p}_Z\|_1 + \|p_{XYZ}-\widehat{p}_{XYZ}\|_1 + 2\|p_{XZ}-\widehat{p}_{XZ}\|_1 +  + 2\|p_{YZ}-\widehat{p}_{YZ}\|_1 \\
    & \le 25\epsilon\,.
\end{align*}
Choosing $\epsilon = c/50$, we have with probability at least
\begin{align*}
    1 - 3\exp(-C_0 nc^2)\,,
\end{align*}
$|\widehat{\dpdm}^{Ber} - \dpdm^{Ber}|\le c/2 $ and $\psi^{Ber}$ is correct, where the constant $C_0=50^2\times 25$.
\end{proof}

\subsection{Proof of lower bound for Bernoulli distribution}
\begin{proof}
We proceed to verify the validity of $p_0^{Ber}$ and $p_1^{Ber}$ by showing they satisfy \ref{cond:lb} and \ref{cond:graph}. 
We can compute the KL divergence between them:
\begin{align*}
    \mathbf{KL}(p_1^{Ber}\|p_0^{Ber}) = \log (1-4c^2) + 2c\log(1+\frac{4c}{1-2c}) \le 100c^2 \,,
\end{align*}
for $c$ small enough. Then \ref{cond:lb} holds.
Moreover, it suffices to show for $p_1^{Ber}$ is $c$-strong tree-faithfulness to the chain graph $Z\to X\to Y$. To achieve this, we can compute
\begin{align*}
    &\dpdm^{Ber}(X,Y) = 2c, \dpdm^{Ber}(X,Z) = 2c, \\
    &\dpdm^{Ber}(X,Y\given Z)\ge 2c(1-4c^2)\ge c, \\
    & \dpdm^{Ber}(Z,X\given Y)\ge 2c(1-4c^2)\ge c\,,
\end{align*}
for $c$ small enough. Then \ref{cond:graph} holds.
\end{proof}

\section{Proof of Theorem~\ref{thm:param:gauss} (Gaussian distribution)}\label{app:gauss}
\begin{proof}
The upper bound \ref{cond:ub} of $\psi^{Gauss}$ is given by Lemma~C.1 in \citet{wang2024optimal}. We proceed to verify the validity of $p_0^{Gauss}$ and $p_1^{Gauss}$ by showing they satisfy \ref{cond:lb} and \ref{cond:graph}. 
We can compute the KL divergence between them:
\begin{align*}
    \mathbf{KL}(p_1^{Gauss}\|p_0^{Gauss}) = -\log(1-4c^2) \le 100c^2 \,,
\end{align*}
for $c$ small enough. Then \ref{cond:lb} holds.
Moreover, it suffices to show for $p_1^{Gauss}$ is $c$-strong tree-faithfulness to the chain graph $Z\to X\to Y$. To achieve this, we can compute
\begin{align*}
    &\dpdm^{Gauss}(X,Y) = 2c, \dpdm^{Gauss}(X,Z) = 2c, \\
    &\dpdm^{Gauss}(X,Y\given Z)\ge 2c(1-4c^2)\ge c, \\
    &\dpdm^{Gauss}(Z,X\given Y)\ge 2c(1-4c^2)\ge c\,,
\end{align*}
for $c$ small enough. Then \ref{cond:graph} holds.
\end{proof}

\section{Proof of Theorem~\ref{thm:tree} (poly-tree learning)}\label{app:tree}
\begin{proof}
The optimality of Gaussian poly-tree learning is established in \citet{wang2024optimal}. 
For Bernoulli poly-tree learning, the upper bound using $\psi^B$ follows the proof of Theorem~4.3 in \citet{wang2024optimal} combined with results in Appendix~\ref{app:bern:ub}. It suffices to prove the validity of the lower bound construction~\eqref{eq:pltr:lb}, which is given by the following two lemmas:
\begin{lemma}\label{lem:bern:lb:kl}
     Assuming $c\le 1/4$, for any two Markov chains $T_1,T_2$, $\mathbf{KL}(P_{T_1}\| P_{T_2}) \le 16 \dd c^2$.
\end{lemma}
\begin{lemma}\label{lem:bern:lb:faith}
    Assuming $c\le 1/4$, for any Markov chain $T$, $P_T$ satisfies $c$-strong tree-faithfulness with respect to $\dpdm^B$.
\end{lemma}
\noindent
Since the size of all directed Markov chains is 
\begin{align*}
    \log \dd! \ge \frac{1}{2}\dd\log \dd\,,
\end{align*}
Corollary~\ref{coro:fano} implies the lower bound of $\log\dd / c^2$.
\end{proof}

We proceed to prove Lemma~\ref{lem:bern:lb:kl} and~\ref{lem:bern:lb:faith}.
We start with three facts about the construction: the first states the marginal of any variable is a ''coin flip''; the second states the positions of conditional probability can be switched; the third states the conditional probability of each variable is close to ''coin flip''.
\begin{lemma}\label{lem:bern:lb:fact}
For $P_{T}$, we have the following:
\begin{enumerate}
    \item For any $k\in[\dd]$, $\prob(X_k=x_k)=1/2$ for $x_k=1$ or $0$.
    \item For any $j\ne k\in[\dd]$, $\prob(X_k=x_k\given X_j=x_j)=\prob(X_j=x_j\given X_k=x_k)$.
    \item For any $j\ne k\in[\dd]$, $\prob(X_k=x_k\given X_j=x_j) = 1/2 + \delta$ with $|\delta|\le c$, for $x_k,x_j\in \{0,1\}$.
\end{enumerate}  
\end{lemma}
\begin{proof}
Without loss of generality, fix $T$ to be the natural ordering: $1\to 2\to \cdots \to \dd$. For the first fact, since $X_1$ is a Bernoulli random variable, for $X_2$, 
\begin{align*}
    \prob(X_2 = x_2) = \sum_{x_1}\prob(X_2 = x_2 \given X_1 = x_1)\prob(X_1=x_1) = \frac{1}{2} \times (\frac{1}{2} + c) + \frac{1}{2} \times (\frac{1}{2} - c) = \frac{1}{2} \,.
\end{align*}
So marginally $X_2$ is a Bernoulli random variable. By induction, $X_3, X_4,\ldots, X_d$ are all marginally a Bernoulli random variable.

For the second fact, because of the first fact,
\begin{align*}
    \prob(X_k\given X_j) = \frac{\prob(X_j\given X_k)\prob(X_k)}{\prob(X_j)} =\prob(X_j\given X_k) \,.
\end{align*}
For the last fact, due to the second fact, we look at the case where $j < k$, otherwise we can switch the positions. By Markov property,
\begin{align*}
    \prob(X_k=x_k\given X_j=x_j) 
    &=\prob(X_k=x_k\given X_{k-1}=1) \prob(X_{k-1}=1\given X_{j}=x_{j}) \\
    &\quad + \prob(X_k=x_k\given X_{k-1}=0)\prob(X_{k-1}=0\given X_{j}=x_{j}) \,,
\end{align*}
which is a convex combination of $\frac{1}{2}\pm c$, then $\prob(X_k=x_k\given X_j=x_j) = \frac{1}{2} + \delta $ for some $|\delta|\le c$.
\end{proof}

\begin{proof}[Proof of Lemma~\ref{lem:bern:lb:kl}]
Without loss of generality, let $T_1$ be the natural ordering: $1\to 2\to \cdots \to \dd$, and $T_2$ is some other ordering $\pi$: $\pi(1)\to \pi(2)\to \cdots \to \pi(\dd)$. Then we have the KL divergence
\begin{align*}
    \mathbf{KL}(P_{T_1}\| P_{T_2}) = \E_{P_{T_1}} \log \frac{P_{T_1}}{P_{T_2}} = \sum_{k=1}^\dd \bigg( \E_{T_1}\log 2P_{T_1}(X_k\given X_{k-1}) - \E_{T_1}\log 2P_{T_2}(X_{\pi(k)}\given X_{\pi(k-1)}) \bigg) \,.
\end{align*}
Inside the summation, for the first term, 
\begin{align*}
    & \E_{T_1}\log 2P_{T_1}(X_k\given X_{k-1}) \\
    = &  \sum_{x_{k-1}}\prob(X_{k-1}=x_{k-1}) \sum_{x_k} \prob(X_k=x_k\given X_{k-1}=x_{k-1}) \log 2\prob(X_k=x_k\given X_{k-1}=x_{k-1}) \\
    = & 2 \times \frac{1}{2} \times \bigg( (\frac{1}{2}+c)\log (1+2c) + (\frac{1}{2}-c)\log(1-2c) \bigg) \\
    =& \frac{1}{2}\log(1-4c^2) + c \log (1 + \frac{4c}{1-2c}) \,.
\end{align*}
For the second term, for simplicity, let $(\pi(k),\pi(k-1))=(\ell,j)$. By Lemma~\ref{lem:bern:lb:fact}, suppose $\prob(X_\ell=x_\ell\given X_j=x_j)=1/2\pm\delta$. Then
\begin{align*}
    & \E_{T_1}\log 2P_{T_2}(X_{\pi(k)}\given X_{\pi(k-1)}) \\
    =& \sum_{x_j}\prob(X_j=x_j)\sum_{x_\ell}\prob(X_\ell=x_\ell\given X_j=x_j)\log 2\prob_{T_2}(X_\ell=x_\ell\given X_j=x_j) \\
    =& 2 \times \frac{1}{2} \times \bigg( (\frac{1}{2}+\delta)\log (1+2c) + (\frac{1}{2}-\delta)\log(1-2c) \bigg) \\
    =& \frac{1}{2}\log(1-4c^2) + \delta \log (1 + \frac{4c}{1-2c}) \,.
\end{align*}
Then the difference between the two terms is
\begin{align*}
    (c-\delta) \log (1 + \frac{4c}{1-2c}) \le (c-\delta)c \times \frac{4}{1-2c} \le 16c^2\,,
\end{align*}
when $c\le 1/4$. Therefore, $\mathbf{KL}(P_{T_1}\| P_{T_2})\le 16\dd c^2$.
\end{proof}

\begin{proof}[Proof of Lemma~\ref{lem:bern:lb:faith}]
Without loss of generality, let $T$ be the natural ordering: $1\to 2\to \cdots \to \dd$. Since there is no V-structure in Markov chain, we only need to show the first requirement in Definition~\ref{defn:strongtreefaith} holds, i.e. for any $k$ and $j\ne k,k+1$, $\dpdm^B(X_k;X_{k+1}\given X_j)\ge c$.
There are two cases to look at: (1) $k > j$; (2) $j > k+1$.

For the first case, we omit the capital letter when writing the probability.
\begin{align*}
    \dpdm^B(X_k;X_{k+1}\given X_j) & = \sum_{x_{k},x_{k+1},x_j} P(x_j) \Big|P(x_{k+1},x_k\given x_j) - P(x_{k+1}\given x_j) P(x_k \given x_j) \Big| \\
    & = \sum_{x_j, x_k} P(x_j)P(x_k\given x_j) \sum_{x_{k+1}}\Big|P(x_{k+1}\given x_j,x_k) - P(x_{k+1}\given x_j) \Big| \\
    & = \sum_{x_j, x_k} P(x_j)P(x_k\given x_j) \sum_{x_{k+1}}\Big|P(x_{k+1}\given x_k) - \sum_{x_k'}P(x_{k+1}\given x_k')P(x_k'\given x_j) \Big| \,.
\end{align*}
By Lemma~\ref{lem:bern:lb:fact}, suppose $P(x_k\given x_j)=1/2\pm\delta$. The calculation for the equation above is summarized below:

\begin{table}[h]
\centering
\begin{tabular}{@{}ccc@{}}
\toprule
$(x_j,x_k)$ & $P(x_j)P(x_k\given x_j)$          & $\sum_{x_{k+1}}\Big|P(x_{k+1}\given x_k) - \sum_{x_k'}P(x_{k+1}\given x_k')P(x_k'\given x_j) \Big|$ \\ \midrule
$(1,1)$     & $\frac{1}{2}(\frac{1}{2}+\delta)$ & $2c(1-2\delta)$                                                                    \\
$(1,0)$     & $\frac{1}{2}(\frac{1}{2}-\delta)$ & $2c(1+2\delta)$                                                                    \\
$(0,1)$     & $\frac{1}{2}(\frac{1}{2}-\delta)$ & $2c(1+2\delta)$                                                                    \\
$(0,0)$     & $\frac{1}{2}(\frac{1}{2}+\delta)$ & $2c(1-2\delta)$                                                                    \\ \bottomrule
\end{tabular}
\end{table}

\noindent 
Therefore, we have 
\begin{align*}
    \dpdm^B(X_k;X_{k+1}\given X_j) = 2c(1-4\delta^2) \ge 2c(1-4c^2) \ge c  \,,
\end{align*}
when $2(1-4c^2) \ge 1 \iff c^2\le 1/8$, which is implied by $c\le 1/4$.

For the second case, we can rewrite
\begin{align*}
    \dpdm^B(X_k;X_{k+1}\given X_j) & = \sum_{x_{k},x_{k+1},x_j}  \Big|P(x_{k+1},x_k, x_j) - P(x_{k+1}\given  x_j) P(x_k , x_j) \Big| \\
    & = \sum_{x_{k},x_{k+1},x_j}  \Big|P(x_j\given x_{k+1})P(x_{k+1}\given x_k)P(x_k) - P(x_j\given x_{k})P(x_k) P(x_{k+1} \given x_j) \Big| \\
    & = \sum_{x_{k},x_{k+1},x_j} P(x_k) P(x_j \given x_{k+1}) \Big|P(x_{k+1}\given x_k) - \sum_{x_{k+1}'}P(x_j\given x_{k+1}')P(x_{k+1}'\given x_k) \Big| \,.
\end{align*}
Again, by Lemma~\ref{lem:bern:lb:fact}, suppose $P(x_j\given x_{k+1})=1/2\pm\delta$. The calculation is summarized in the table below:
\begin{table}[h]
\centering
\begin{tabular}{@{}ccc@{}}
\toprule
$(x_{k},x_{k+1},x_j)$ & $P(x_j\given x_{k+1})$ & $\Big|P(x_{k+1}\given x_k) - \sum_{x_{k+1}'}P(x_j\given x_{k+1}')P(x_{k+1}'\given x_k) \Big|$ \\ \midrule
$(1,1,1)$             & $\frac{1}{2} + \delta$ & $c(1+2\delta)$                                                                             \\
$(1,1,0)$             & $\frac{1}{2} - \delta$ & $c(1-2\delta)$                                                                             \\
$(1,0,1)$             & $\frac{1}{2} - \delta$ & $c(1+2\delta)$                                                                             \\
$(1,0,0)$             & $\frac{1}{2} + \delta$ & $c(1-2\delta)$                                                                             \\
$(0,1,1)$             & $\frac{1}{2} + \delta$ & $c(1-2\delta)$                                                                             \\
$(0,1,0)$             & $\frac{1}{2} - \delta$ & $c(1+2\delta)$                                                                             \\
$(0,0,1)$             & $\frac{1}{2} - \delta$ & $c(1-2\delta)$                                                                             \\
$(0,0,0)$             & $\frac{1}{2} + \delta$ & $c(1+2\delta)$                                                                             \\ \bottomrule
\end{tabular}
\end{table}

\noindent 
Therefore, we have 
\begin{align*}
    \dpdm^B(X_k;X_{k+1}\given X_j) =  4c > c \,,
\end{align*}
and $P_T$ satisfies $c$-strong tree-faithfulness.
\end{proof}

\section{Proof of Theorem~\ref{thm:param:np} (nonparametric continuous distribution)}\label{app:np}

\begin{proof}
    The upper bound \ref{cond:ub} of $\psi^{NP}$ is given by Theorem~5.6 in \citet{neykov2021minimax}. We proceed to specify the constructions of $p_0^{NP}$ and $p_1^{NP}$ and show they satisfy \ref{cond:lb} and \ref{cond:graph}.

    Let $p_0^{NP}$ be independent uniform distributions $Unif^3[0,1]$. Thus, $p_0^{NP}$ is Markov to an empty graph, and satisfies Lipschitz and smoothness condition.
    For $p_1^{NP}$, we design it to be Markov to a $V$-structure $X\rightarrow Z \leftarrow Y$, which is a three-node poly-forest. Under this graph, a faithful distribution is supposed to have $X\indep Y$ while $X\not\indep Y\given Z$ and $X\not\indep Z,Y\not\indep Z$. In particular, $p_1^{NP}$ is specified as follows (we will suppress the superscript and subscript by writing it as $p$ to avoid notation clutter): $X,Y\sim Unif[0,1]$ and $p_{Z\given X,Y}$ is a mixture of perturbation to uniform distribution, whose component depends on independent multi-dimensional Radmacher random variables $\Delta \in\{-1,1\}^{m'\times m'},\nu\in\{-1,1\}^{m}$ for some positive integers $m,m'$ which are specified later. Then
    \begin{align*}
        p(Z\given X,Y) = \E_{\Delta,\nu}\bigg[1 + & \gamma_{\Delta}(X,Y)\eta_\nu(Z)  \bigg] \,,
    \end{align*}
    where
    \begin{align*}
        \gamma_{\Delta}(x,y) &= \rho^2 \sum_{i\in[m']}\sum_{j\in[m']}\Delta_{ij} h_{ij,m'}(x,y)\\
        \eta_{\nu}(z) &= \rho \sum_{j\in[m]}\nu_{j}  h_{j,m}(z) \\
        h_{ij,m'}(x,y) & = \begin{cases}
            \sqrt{m'}^2 \widetilde{h}(m'x-i+1, m'y-j+1) & \forall (x,y)\in [\frac{i-1}{m}, \frac{i}{m}] \otimes [\frac{j-1}{m}, \frac{j}{m}] \\
            0 &   \text{otherwise}
        \end{cases}  \\
        h_{j,m}(z) & = \begin{cases}
            \sqrt{m} h(m z - j + 1) & \forall z\in[\frac{j-1}{m}, \frac{j}{m}]\\
            0 & \text{otherwise}
        \end{cases} \\
        \int \widetilde{h}(x,y) dx & = \sqrt{m'}h(y) \quad \int \widetilde{h}(x,y) dy = \sqrt{m'}h(x)  \,.
    \end{align*}
    for some function $h(x)$ infinitely differentiable on $[0,1]$ such that
    \begin{align*}
        &\int h(x) dx = 0, \int h^2(x)dx  = 1, \int |h(x)| dx = b_1, \int |\widetilde{h}(x,y)|dxdy = b_2, \|h\|_{\infty}\vee\|h'\|_{\infty}\le a \,,
    \end{align*}
    for some $\rho > 0$ that will be specified later, and some constants $a,b_1,b_2>0$.

    Now we show that each $p$ satisfy the $c$-strong tree-faithfulness and smoothness conditions. Since the operation $\E_{\Delta,\nu}$ is linear, we consider one instance of $(\Delta,\nu)$ in the following discussion. 
    
    \paragraph{$c$-strong tree-faithfulness}
    We highlight several important observations: 
    \begin{align*}
        p(z) & = \int_{x,y} p(z\given x,y)p(x)p(y) = \int_{x,y} p(z\given x,y) = 1 \\
        p(x\given z) & = \frac{p(x,z)}{p(z)} = p(x,z) = \int_{y} p(z\given x,y) = 1 + \Big[\rho^2\sum_i \Big(\sum_j\Delta_{ij}\Big)h_{i,m'}(x)\Big]\eta_\nu(z) \\
        p(y\given z) & = \frac{p(y,z)}{p(z)} = p(y,z) = \int_{x} p(z\given x,y) = 1 + \Big[\rho^2\sum_j \Big(\sum_i\Delta_{ij}\Big)h_{j,m'}(y)\Big]\eta_\nu(z) \,.
    \end{align*}
    Therefore, tree-faithfulness is satisfied (while strong version still needs to be shown):
    \begin{align*}
        p(x, z) - p(x)p(z) &=  \Big[\rho^2\sum_i \Big(\sum_j\Delta_{ij}\Big)h_{i,m'}(x)\Big]\eta_\nu(z) \ne 0 \\
        p(y, z) - p(y)p(z) &=  \Big[\rho^2\sum_j \Big(\sum_i\Delta_{ij}\Big)h_{j,m'}(y)\Big]\eta_\nu(z) \ne 0 \\
        p(x,y\given z) - p(x\given z) p(y\given z) & = \bigg\{\gamma_{\Delta}(x,y)  \\
        & \quad - \Big[\rho^2\sum_j \Big(\sum_i\Delta_{ij}\Big)h_{j,m'}(y)\Big] - \Big[\rho^2\sum_i \Big(\sum_j\Delta_{ij}\Big)h_{i,m'}(x)\Big]\bigg\}\eta_{\nu}(z) \\
        & \quad - \rho^4\Big[ \sum_i \Big(\sum_j\Delta_{ij}\Big)h_{i,m'}(x)\Big]\Big[\sum_j \Big(\sum_i\Delta_{ij}\Big)h_{j,m'}(y)\Big]\eta_{\nu}^2(z) \ne 0 \,.
    \end{align*}
    By Lemma~B.4 in~\citet{neykov2021minimax}, it suffices to show the above three nonzero quantities are bounded away from zero in $L_1$. Specifically, because we have for any $i$ or $j$, 
    \begin{align*}
       0.7\sqrt{m'}\le \E_{\Delta}|\sum_j \Delta_{ij}| = \E_{\Delta}|\sum_i \Delta_{ij}| \le \sqrt{m'}   \,.
    \end{align*}
    Then
    \begin{align*}
        \|p(x,y\given z) - p(x\given z) p(y\given z)\|_1 & \ge \rho^2 \int \Big| \sum_{i,j}\Delta_{ij}\Big[h_{ij,m'}(x,y) - h_{i,m'}(x) - h_{j,m'}(y)\Big] \Big| \Big|\eta_v(z) \Big|\\
        &\quad - \rho^4 \int \Big| \Big[\sum_{i}\Big(\sum_j\Delta_{ij}\Big)h_{i,m'}(x)\Big] \Big[\sum_{j}\Big(\sum_i\Delta_{ij}\Big)h_{j,m'}(y)\Big]  \Big| \Big|  \eta_v^2(z) \Big| \\
        & \ge \Big(\rho^2 (m')^2 \times  \frac{1}{m'} g\Big) \times \Big(\rho\sqrt{m} a\Big) - \rho^4\Big(m'\times \sqrt{m'}\times \frac{1}{\sqrt{m'}}a\Big)^2 \times \Big(\rho^2 m\Big) \\
        & = \rho^3 m'\sqrt{m} \times ag - \rho^6 (m')^2m\times a^2\,,
    \end{align*}
    where $g=\int |\widetilde{h}(x,y) - \frac{1}{\sqrt{m'}}h(x) - \frac{1}{\sqrt{m'}} h(y)|dxdy$ being a constant. And we need for either $w=x$ or $y$,
    \begin{align*}
        \| p(w, z) - p(w)p(z)\|_1 & =  \int \rho^2\Big| \sum_i \Big(\sum_j\Delta_{ij}\Big)h_{i,m'}(w) \Big| \Big| \eta_\nu(z) \Big|  \\
        & \ge \Big( \rho^2 m'\times 0.7\sqrt{m'}\times \frac{1}{\sqrt{m'}} a\Big) \times \Big(\rho \sqrt{m} a\Big) \\
        & = \rho^3 m'\sqrt{m} a^2\,.
    \end{align*}
    We will lower bound both of them above by the order of $c$ when specifying the parameters.

    \paragraph{Lipschitz condition}
    We then check the TV distance between $p(x,y\given z)$ and $p(x,y\given z')$:
    \begin{align*}
        \| p(x,y\given z) - p(x,y\given z') \|_1 & \le  \int \Big|\gamma_{\Delta}(x, y) \Big| \Big|\eta_\nu(z) - \eta_\nu(z')\Big| \\
         & \le b_2 m'\rho^2 \times  \Big|\eta_\nu(z) - \eta_\nu(z')\Big| \\
         & \le \bigg[(b_2 m'\rho^2) \times (m^{1/2}m\rho\|h'\|_{\infty})\bigg]|z-z'| \,.
    \end{align*}
    We will need the term in the bracket to be smaller than some constant.

    \paragraph{Smoothness condition}
     We check the H\"older smoothness of $p(x,y\given z)$ in $(x,y)$. Following the proof Theorem~4.2 in~\citet{neykov2021minimax}, for any $k\le \lfloor s \rfloor$
    \begin{align*}
        & \bigg| \frac{\partial^k}{\partial x^k}\frac{\partial^{\lfloor s \rfloor-k}}{\partial y^{\lfloor s \rfloor-k}} p(x,y\given z) -  \frac{\partial^k}{\partial x^k}\frac{\partial^{\lfloor s \rfloor-k}}{\partial y^{\lfloor s \rfloor-k}} p(x',y'\given z)\bigg| \\
        \le & \bigg| \frac{\partial^k}{\partial x^k}\frac{\partial^{\lfloor s \rfloor-k}}{\partial y^{\lfloor s \rfloor-k}} \gamma_{\Delta}(x,y)\eta_{\nu}(z) -  \frac{\partial^k}{\partial x^k}\frac{\partial^{\lfloor s \rfloor-k}}{\partial y^{\lfloor s \rfloor-k}} \gamma_{\Delta}(x',y')\eta_{\nu}(z)\bigg| \\
         \le & m^{1/2}\rho\|h\|_\infty\bigg| \frac{\partial^k}{\partial x^k}\frac{\partial^{\lfloor s \rfloor-k}}{\partial y^{\lfloor s \rfloor-k}} \gamma_{\Delta}(x,y) -  \frac{\partial^k}{\partial x^k}\frac{\partial^{\lfloor s \rfloor-k}}{\partial y^{\lfloor s \rfloor-k}} \gamma_{\Delta}(x',y')\bigg| \\
         \le & (\rho m^{1/2}a) \times \Big(\rho^2(m')^s \sqrt{m'}^2 \bigg\|\frac{\partial^k}{\partial x^k}\frac{\partial^{\lfloor s \rfloor-k}}{\partial y^{\lfloor s \rfloor-k}} \widetilde{h}(x,y) \bigg\|_\infty  \Big)\,.
    \end{align*}
    Thus, the quantity that need to be bounded above by constant is 
    \begin{align*}
        \rho^3 m^{1/2} {m'}^{1+s}  \,.
    \end{align*}

    \paragraph{Parameter choice}
    All in all, we need the choice of $m,m',\rho$ to satisfy the following requirements:
    \begin{itemize}
        \item $c$-strong tree-faithfulness:
        \begin{align*}
            \rho^3m^{1/2}m' - \rho^6m(m')^2 & \gtrsim c \\
            \rho^3 m^{1/2}{m'} & \gtrsim c
        \end{align*}
        \item Lipschitz condition:
        \begin{align*}
            \rho^3 m^{3/2}m' & \lesssim 1 \\
        \end{align*}
        \item Smoothness condition:
        \begin{align*}
            \rho^3 m^{1/2} {m'}^{1 + s} & \lesssim 1 \\
        \end{align*}
    \end{itemize}
    By setting $ m'=m^{1/s}, \rho^3\asymp m^{-(3/2 + 1/s)}, m\asymp c^{-1}$ and assuming $c$ is sufficiently small, above requirements are satisfied. Thus, $p$ falls into the considered model class.
    
    \paragraph{KL divergence}
    We start by bounding the $\chi^2$ divergence then upper bound KL divergence by $\chi^2$ divergence:
    \begin{align*}
        \chi^2(p_1^{NP}\| p_0^{NP}) + 1 = & \int \frac{(p_1^{NP})^2}{p_0^{NP}} =   \E_{\Delta,\Delta',\nu,\nu'}\int \bigg[1 + \gamma_{\Delta}(x,y)\eta_\nu(z)
        \bigg] \times \bigg[1 + \gamma_{\Delta'}(x,y)\eta_{\nu'}(z)
        \bigg] \,.
    \end{align*}
    The integral is
    \begin{align*}
        & \int \bigg[1 + \gamma_{\Delta}(x,y)\gamma_{\Delta'}(x,y)\eta_{\nu}(z)\eta_{\nu'}(z) \bigg] 
        =  1 + \rho^6\langle \Delta,\Delta' \rangle\langle \nu,\nu'\rangle \,.
    \end{align*}
    Therefore,
    \begin{align*}
        \chi^2(p_1^{NP}\| p_0^{NP}) + 1 & =   \E_{\Delta,\Delta',\nu,\nu'}\bigg\{  1 + \rho^6\langle \Delta,\Delta' \rangle\langle \nu,\nu'\rangle  \bigg\}  \le \E_{\Delta,\Delta',\nu,\nu'}\bigg\{  \exp\bigg( \rho^6\langle \Delta,\Delta' \rangle\langle \nu,\nu'\rangle \bigg)\bigg\} \,.
    \end{align*}
    Following the proof Theorem~4.2 in~\citet{neykov2021minimax}, we can upper bound the right hand side above and obtain
    \begin{align*}
        \chi^2(p_1^{NP}\| p_0^{NP}) + 1 \le\sqrt{\frac{1}{1 - (\rho^6)^2m{m'}^2}} \,.
    \end{align*}
    Since the function $f(t)=1 / \sqrt{1-t^2} \le 2t + 1$ for $t$ sufficiently small, with the choice of $\rho,m,m'$, we have for sufficiently small $c$,
    \begin{align*}
        \chi^2(p_1^{NP}\| p_0^{NP}) + 1 \le C_0 \times c^{\frac{5s+2}{2s}} + 1 \,,
    \end{align*}
    for some constant $C_0$. Therefore, we arrive at
    \begin{align*}
        \mathbf{KL}(p_1^{NP}\| p_0^{NP}) \le \chi^2(p_1^{NP}\|p_0^{NP}) \lesssim c^{\frac{5s+2}{2s}}
    \end{align*}
    Application of Corollary~\ref{coro:tsy} completes the proof.
\end{proof}

\section{Auxiliary lemmas}
For lower bound techniques, we mainly apply the Fano's inequality and Tsybokov's method.
\begin{lemma}[\citet{yu1997assouad}, Lemma~3]\label{lem:fano}
For a model family $\mathcal{M}$ contains $M$ many distributions indexed by $j=1,2,\ldots,M$ such that 
\begin{align*}
\alpha &= \max_{P_j\ne P_k\in\mathcal{M}}\mathbf{KL}(P_j\| P_k) \\ 
s& =\min_{P_j\ne P_k\in\mathcal{M}}\mathbf{dist}(\theta(P_j),\theta(P_k)) \,,   
\end{align*}
where $\theta$ is a functional of its distribution argument. Then for any estimator $\widehat{\theta}$ for $\theta(P)$,
\begin{align*}
    \inf_{\widehat{\theta}}\sup_{P\in\mathcal{M}} \E_P \mathbf{dist}(\theta(P), \widehat{\theta}) \ge \frac{s}{2}\bigg(1 - \frac{\alpha + \log 2 }{\log M}\bigg) \,.   
\end{align*}
\end{lemma}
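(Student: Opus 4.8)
The plan is to run the standard reduction from estimation to a multiway hypothesis test and then invoke Fano's inequality. First I would put a uniform prior on the index set: let $J$ be drawn uniformly from $\{1,\dots,M\}$ and, conditionally on $J=j$, let the data $X$ have law $P_j$. Given an arbitrary estimator $\widehat{\theta}=\widehat{\theta}(X)$, define the induced test $\widehat{J}$ to be the (smallest) index $j$ minimizing $\mathbf{dist}(\theta(P_j),\widehat{\theta})$. Since the points $\{\theta(P_j)\}_j$ are pairwise $s$-separated, the triangle inequality shows that $\mathbf{dist}(\theta(P_J),\widehat{\theta})<s/2$ forces $\widehat{J}=J$; contrapositively $\{\widehat{J}\neq J\}\subseteq\{\mathbf{dist}(\theta(P_J),\widehat{\theta})\geq s/2\}$. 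Markov's inequality, followed by bounding the average risk by the worst-case risk, then gives
\begin{align*}
    \prob(\widehat{J}\neq J)\le \frac{2}{s}\,\E\big[\mathbf{dist}(\theta(P_J),\widehat{\theta})\big]=\frac{2}{s}\cdot\frac{1}{M}\sum_{j=1}^M \E_{P_j}\big[\mathbf{dist}(\theta(P_j),\widehat{\theta})\big]\le \frac{2}{s}\sup_{P\in\mathcal{M}}\E_P\big[\mathbf{dist}(\theta(P),\widehat{\theta})\big].
\end{align*}

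Next I would lower bound $\prob(\widehat{J}\neq J)$ information-theoretically. Since $J\to X\to\widehat{J}$ is a Markov chain, the data-processing inequality together with the usual form of Fano's inequality (using $H(J)=\log M$) yields
\begin{align*}
    \prob(\widehat{J}\neq J)\ge 1-\frac{I(J;X)+\log 2}{\log M}.
\end{align*}
It then remains to control $I(J;X)$. Writing $\bar{P}=\frac{1}{M}\sum_k P_k$ for the mixture, $I(J;X)=\frac{1}{M}\sum_j \mathbf{KL}(P_j\|\bar{P})$; by convexity of $\mathbf{KL}(P_j\|\cdot)$ one gets $\mathbf{KL}(P_j\|\bar{P})\le\frac{1}{M}\sum_k \mathbf{KL}(P_j\|P_k)$, hence $I(J;X)\le\frac{1}{M^2}\sum_{j,k}\mathbf{KL}(P_j\|P_k)\le\max_{j\neq k}\mathbf{KL}(P_j\|P_k)=\alpha$ (the diagonal terms vanish).

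Chaining the three displays gives $\frac{2}{s}\sup_{P\in\mathcal{M}}\E_P[\mathbf{dist}(\theta(P),\widehat{\theta})]\ge 1-\frac{\alpha+\log 2}{\log M}$, i.e. $\sup_{P\in\mathcal{M}}\E_P[\mathbf{dist}(\theta(P),\widehat{\theta})]\ge\frac{s}{2}\big(1-\frac{\alpha+\log 2}{\log M}\big)$, and taking the infimum over estimators $\widehat{\theta}$ finishes the proof. The arithmetic is entirely routine, so the only places needing genuine care are the estimation-to-testing reduction — one must fix a tie-breaking rule so the minimum-distance decoder is well defined, and check that $\mathbf{dist}$ satisfies the triangle inequality used to obtain the event inclusion $\{\widehat{J}\neq J\}\subseteq\{\mathbf{dist}\geq s/2\}$ (if $\mathbf{dist}$ is only a semimetric, verify the threshold $s/2$ is still correct) — and the convexity step bounding $I(J;X)$ by $\alpha$, which is what converts the pairwise KL hypothesis of the lemma into a bound on how much information the data carries about $J$. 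I expect the reduction step to be the main conceptual obstacle, the rest being bookkeeping.
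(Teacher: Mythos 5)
Your proof is correct and is exactly the standard argument behind this lemma (reduction from estimation to an $M$-ary test via the minimum-distance decoder and Markov's inequality, Fano's inequality for the test, and the convexity bound $I(J;X)\le\max_{j\ne k}\mathbf{KL}(P_j\|P_k)$); the paper itself states the result as a citation to Yu (1997, Lemma 3) without reproducing a proof, and your argument matches the one given there. The only caveats you flag — tie-breaking in the decoder and the triangle inequality for $\mathbf{dist}$ — are indeed the implicit hypotheses of the cited lemma, so nothing is missing.
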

\begin{lemma}[\citet{tsybakov2008introduction}, Theorem~2.5]\label{lem:tsy}
    For a model family $\mathcal{M}$ contains distributions $P_0,P_1,\ldots,P_M$ with $M\ge 2$ and suppose that $\Theta$ contains elements $\theta_0,\theta_1,\ldots,\theta_M$ such that:
    \begin{enumerate}
        \item $\mathbf{dist}(\theta_j,\theta_k)\ge 2s$, $\forall 0\le j<k\le M$;
        \item $P_j\ll P_0$, $\forall j=1,\ldots,M$, and 
        \begin{align*}
            \frac{1}{M}\sum_{j=1}^M\mathbf{KL}(P_j,P_0) \le \alpha\log M
        \end{align*}
        with $0<\alpha<1/8$ and $P_j=P_{\theta_j}$. Then
    \end{enumerate}
    \begin{align*}
        \inf_{\widehat{\theta}}\sup_{\theta\in\Theta} \prob_\theta \Big(\mathbf{dist}(\theta, \widehat{\theta}) \ge s\Big) \ge \frac{\sqrt{M}}{1+\sqrt{M}}\bigg(1 - 2\alpha - \frac{2\alpha}{\log M}\bigg) \,.
    \end{align*}
\end{lemma}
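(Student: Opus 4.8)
This statement is \citet[Theorem~2.5]{tsybakov2008introduction}, the standard finite‑hypothesis minimax lower bound; were one to establish it from first principles, the plan is the classical two‑step argument: first reduce estimation to a multiple‑hypothesis testing problem, then lower‑bound the minimax testing error using only the single‑reference average Kullback--Leibler bound assumed in item~2 of the lemma.

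\textbf{Step 1: reduction to testing.} Given any estimator $\widehat{\theta}$, I would introduce the minimum‑distance selector $\widehat{\psi} := \argmin_{0\le k\le M}\mathbf{dist}(\widehat{\theta},\theta_k)\in\{0,1,\dots,M\}$ (ties broken arbitrarily). If $\mathbf{dist}(\widehat{\theta},\theta_j)<s$ then, for every $k\ne j$, the triangle inequality and $\mathbf{dist}(\theta_j,\theta_k)\ge 2s$ give $\mathbf{dist}(\widehat{\theta},\theta_k)\ge \mathbf{dist}(\theta_j,\theta_k)-\mathbf{dist}(\widehat{\theta},\theta_j) > 2s-s = s > \mathbf{dist}(\widehat{\theta},\theta_j)$, so $\widehat{\psi}=j$; hence $\{\widehat{\psi}\ne j\}\subseteq\{\mathbf{dist}(\widehat{\theta},\theta_j)\ge s\}$. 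Taking the supremum over $\theta\in\{\theta_0,\dots,\theta_M\}\subseteq\Theta$ and then the infimum over $\widehat{\theta}$,
\[
\inf_{\widehat{\theta}}\sup_{\theta\in\Theta}\prob_\theta\big(\mathbf{dist}(\widehat{\theta},\theta)\ge s\big)\;\ge\; p_{e,M} := \inf_{\psi}\max_{0\le j\le M}P_j(\psi\ne j),
\]
so the problem reduces to lower‑bounding the minimax testing error $p_{e,M}$.

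\textbf{Step 2: lower‑bounding $p_{e,M}$.} Fix a test $\psi$ with (disjoint) acceptance regions $A_0,\dots,A_M$ and set $a := P_0(\psi\ne 0)=\sum_{j=1}^M P_0(A_j)$. I would then play two inequalities against each other. First, taking $j=0$, $p_{e,M}\ge a$. Second, for any $\tau\ge 1$, the likelihood‑ratio truncation $P_j(A_j)=\int_{A_j}\tfrac{dP_j}{dP_0}\,dP_0\le \tau P_0(A_j)+P_j\big(\tfrac{dP_j}{dP_0}>\tau\big)$ together with the tail bound $P_j\big(\tfrac{dP_j}{dP_0}>\tau\big)\le \frac{\mathbf{KL}(P_j\|P_0)+1/e}{\log\tau}$ --- obtained by splitting $\log\tfrac{dP_j}{dP_0}$ at the level $\log\tau$, using $\E_{P_j}\big[\log\tfrac{dP_j}{dP_0}\big]=\mathbf{KL}(P_j\|P_0)$ and controlling the negative part by $\E_{P_j}\big[(-\log\tfrac{dP_j}{dP_0})_+\big]\le\tfrac1e\E_{P_j}\big[\tfrac{dP_0}{dP_j}\big]\le\tfrac1e$ --- yields, after averaging over $j=1,\dots,M$ and invoking $\sum_{j=1}^M P_0(A_j)=a$ and $\tfrac1M\sum_j\mathbf{KL}(P_j\|P_0)\le\alpha\log M$,
\[
p_{e,M}\;\ge\;\max_{1\le j\le M}P_j(A_j^c)\;\ge\; 1-\frac{\tau a}{M}-\frac{\alpha\log M+1/e}{\log\tau}.
\]
Thus $p_{e,M}$ exceeds the maximum of a function increasing in $a$ and a function decreasing in $a$; minimizing that maximum over $a\in[0,1]$ equalizes the two and gives $p_{e,M}\ge\frac{M}{M+\tau}\big(1-\frac{\alpha\log M+1/e}{\log\tau}\big)$, and the choice $\tau=\sqrt M$ turns this into $p_{e,M}\ge\frac{\sqrt M}{1+\sqrt M}\big(1-2\alpha-\frac{2/e}{\log M}\big)$, i.e. the asserted bound up to the numerical constant in the $O(1/\log M)$ term; $0<\alpha<1/8$ is exactly what makes this positive for $M$ large.

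\textbf{Main obstacle.} Nothing here is deep, but two points want care. The first is the tail bound on $P_j(\tfrac{dP_j}{dP_0}>\tau)$: since $\log\tfrac{dP_j}{dP_0}$ is not sign‑definite, one must truncate and absorb its negative part into an absolute constant, using $\log_+ u\le u/e$ with $u=\tfrac{dP_0}{dP_j}$. The second is the min--max balancing over $a$ together with the choice $\tau=\sqrt M$ and the bookkeeping needed to land on exactly $\frac{\sqrt M}{1+\sqrt M}(1-2\alpha-O(1/\log M))$ rather than a merely comparable constant. The reduction in Step~1, the averaging, and the truncation identity itself are routine.
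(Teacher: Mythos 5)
The paper does not prove this lemma at all: it is quoted verbatim (up to what appears to be a transcription slip, see below) from \citet[Theorem~2.5]{tsybakov2008introduction} and used only through Corollary~\ref{coro:tsy}. So the comparison is against Tsybakov's textbook proof, and your argument is essentially that proof: the minimum-distance selector reduction in your Step~1 is exactly Tsybakov's reduction to testing, and your Step~2 is his Proposition~2.3-style likelihood-ratio truncation followed by balancing $P_0(\psi\ne 0)=a$ against the averaged alternative errors and choosing $\tau=\sqrt M$. All of those steps are sound, including the inequality $\E_{P_j}[(\log\tfrac{dP_0}{dP_j})_+]\le \tfrac1e\E_{P_j}[\tfrac{dP_0}{dP_j}]\le\tfrac1e$.

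The one substantive discrepancy is the constant in the $O(1/\log M)$ remainder, and you should not wave it away: your bound is $\frac{\sqrt M}{1+\sqrt M}\bigl(1-2\alpha-\frac{2/e}{\log M}\bigr)$, whereas the statement claims $\frac{\sqrt M}{1+\sqrt M}\bigl(1-2\alpha-\frac{2\alpha}{\log M}\bigr)$. Since $\alpha<1/8<1/e$, you have $2/e>2\alpha$, so what you prove is strictly \emph{weaker} than the displayed inequality (and for small $M$, e.g. $M=2$, your version is vacuous while the displayed one is not). The standard way to get the sharp remainder is to control the negative part of the log-likelihood ratio not by $\log_+u\le u/e$ but by Pinsker's second inequality, $\E_{P_j}[(\log\tfrac{dP_j}{dP_0})_-]\le\sqrt{\mathbf{KL}(P_j\|P_0)/2}$; after averaging over $j$ and applying Jensen this gives the remainder $\sqrt{2\alpha/\log M}$, which is in fact the form appearing in Tsybakov's Theorem~2.5 --- the paper's statement seems to have dropped the square root. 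So: right strategy, correct reduction and truncation, but the final constant does not match the claim, and the fix is to swap the $1/e$ bound for the Pinsker-type bound on the downward fluctuation.
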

\noindent
Set the parameter of interest $\theta(P_j)=j$ to be the model index, and distance between parameters to be $\mathbf{dist}(\cdot,\cdot) = \mathbf{1}\{\cdot \ne \cdot\}$, consider $P_j$ to be a product measure of $n$ i.i.d. samples for any $P_j\in \mathcal{M}$, then Lemma~\ref{lem:fano} and~\ref{lem:tsy} under model selection context can be stated as follows:
\begin{corollary}[Fano's inequality]\label{coro:fano}
For a model family $\mathcal{M}$ contains $M$ many distributions indexed by $j=1,2,\ldots,M$ such that 
$\alpha= \max_{P_j\ne P_k\in\mathcal{M}}\mathbf{KL}(P_j\| P_k)$. 
If the sample size is bounded as
\begin{align*}
    n \le \frac{(1-2\delta)\log M}{\alpha} \,,
\end{align*}
then for any estimator $\widehat{\theta}$ for the model index:
\begin{align*}
    \inf_{\widehat{\theta}}\sup_{j\in[M]} P_j(\widehat{\theta}\ne j) \ge \delta - \frac{\log 2}{\log M} \,.
\end{align*}
\end{corollary}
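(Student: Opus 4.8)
The plan is to obtain Corollary~\ref{coro:fano} as a direct specialization of Fano's inequality in the form of Lemma~\ref{lem:fano}. First I would apply Lemma~\ref{lem:fano} to the model family consisting of the $n$-fold product measures $\mathcal{M}_n = \{P_1^{\otimes n},\ldots,P_M^{\otimes n}\}$ — reflecting that an estimator $\widehat{\theta}$ is a function of $n$ i.i.d.\ samples — with the functional taken to be the index map $\theta(P_j^{\otimes n}) = j$ and the distance taken to be the discrete metric $\mathbf{dist}(a,b) = \mathbbm{1}\{a\ne b\}$. With these choices the separation appearing in Lemma~\ref{lem:fano} is $s = \min_{j\ne k}\mathbbm{1}\{j\ne k\} = 1$, and we assume $M\ge 2$ so that $\log M>0$.

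Next I would bound the maximal pairwise KL divergence over $\mathcal{M}_n$. By additivity of KL divergence across independent coordinates (tensorization), $\mathbf{KL}(P_j^{\otimes n}\|P_k^{\otimes n}) = n\,\mathbf{KL}(P_j\|P_k)$, so the quantity playing the role of ``$\alpha$'' in Lemma~\ref{lem:fano} is $n\max_{j\ne k}\mathbf{KL}(P_j\|P_k) = n\alpha$ in the notation of the corollary. Substituting $s = 1$ and this value into Lemma~\ref{lem:fano}, and using $\E_{P_j}\mathbbm{1}\{\widehat{\theta}\ne j\} = P_j(\widehat{\theta}\ne j)$, gives
\begin{align*}
    \inf_{\widehat{\theta}}\sup_{j\in[M]} P_j(\widehat{\theta}\ne j) \ge \frac{1}{2}\Big(1 - \frac{n\alpha + \log 2}{\log M}\Big) .
\end{align*}

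Finally I would plug in the sample-size hypothesis: from $n \le (1-2\delta)\log M/\alpha$ we get $n\alpha \le (1-2\delta)\log M$, hence $n\alpha/\log M \le 1-2\delta$, and therefore
\begin{align*}
    \frac{1}{2}\Big(1 - \frac{n\alpha + \log 2}{\log M}\Big) \ge \frac{1}{2}\Big(2\delta - \frac{\log 2}{\log M}\Big) = \delta - \frac{\log 2}{2\log M} \ge \delta - \frac{\log 2}{\log M} ,
\end{align*}
which is the claimed bound. I do not expect a genuine obstacle here: this is the textbook derivation of an identification (model-selection) lower bound from Fano's method. The only points worth a line of care are the tensorization step that passes from a single observation to $n$ observations, and the observation that the constant $\log 2/\log M$ in the statement is slightly looser than the $\log 2/(2\log M)$ the computation actually produces, so the inequality holds with room to spare (one could equally keep the sharper constant).
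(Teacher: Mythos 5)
Your proof is correct and matches the paper's intended derivation: the paper itself only sketches the specialization (index functional, discrete metric, $n$-fold product measures) without writing out the arithmetic, and your computation fills in exactly those steps. Your observation that the derivation actually yields the sharper constant $\log 2/(2\log M)$, which trivially implies the stated $\log 2/\log M$, is also correct.
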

\begin{corollary}[Tsybakov's method]\label{coro:tsy}
    For a model family $\mathcal{M}$ contains $M$ many distributions indexed by $j=1,2,\ldots,M$ such that 
    $\alpha= \max_{j\in[M]}\mathbf{KL}(P_j\| P_0)$. 
    If the sample size is bounded as
    \begin{align*}
        n \le \frac{\log M}{16 \alpha} \,,
    \end{align*}
    then for any estimator $\widehat{\theta}$ for the model index:
    \begin{align*}
        \inf_{\widehat{\theta}}\sup_{j\in[M]} P_j(\widehat{\theta}\ne j) \ge \frac{1}{16} \,.
    \end{align*}
\end{corollary}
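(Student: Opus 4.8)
The plan is to read Corollary~\ref{coro:tsy} off Lemma~\ref{lem:tsy} (Tsybakov's method) by specializing it to the model-selection setting, with the tensorization of KL divergence supplying the only nontrivial input. Concretely, I would invoke Lemma~\ref{lem:tsy} with parameter set $\Theta=\{0,1,\dots,M\}$, $\theta_j=j$, loss $\mathbf{dist}(\cdot,\cdot)=\mathbf{1}\{\cdot\neq\cdot\}$, and with the distribution attached to index $j$ taken to be the $n$-fold product $P_j^{\otimes n}$ (matching the convention, stated just before the corollary, that the $P_j$ appearing in the conclusion are product measures of $n$ i.i.d.\ observations). Under this dictionary the event $\{\mathbf{dist}(\theta,\widehat\theta)\ge s\}$ is exactly $\{\widehat\theta\neq\theta\}$, and the separation hypothesis $\mathbf{dist}(\theta_j,\theta_k)\ge 2s$ for $j\neq k$ holds with $s=\tfrac12$, since distinct indices are always at distance $1$.

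Next I would verify the information-theoretic hypothesis of Lemma~\ref{lem:tsy}. Finiteness of $\alpha=\max_j\mathbf{KL}(P_j\|P_0)$ already forces $P_j\ll P_0$, so absolute continuity is free. By additivity of KL over i.i.d.\ coordinates, $\mathbf{KL}(P_j^{\otimes n}\|P_0^{\otimes n})=n\,\mathbf{KL}(P_j\|P_0)\le n\alpha$ for each $j$, hence $\tfrac1M\sum_{j=1}^{M}\mathbf{KL}(P_j^{\otimes n}\|P_0^{\otimes n})\le n\alpha$. Under the standing assumption $n\le \log M/(16\alpha)$, this average is at most $\tfrac1{16}\log M$, so the hypothesis of Lemma~\ref{lem:tsy} is met with Tsybakov constant $1/16<1/8$.

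It then remains to substitute these choices into the conclusion of Lemma~\ref{lem:tsy}, which gives $\inf_{\widehat\theta}\sup_{j\in[M]}P_j(\widehat\theta\neq j)\ge \frac{\sqrt M}{1+\sqrt M}\bigl(1-\tfrac18-\tfrac{1}{8\log M}\bigr)$. For $M\ge 2$ the prefactor is at least $\sqrt2/(1+\sqrt2)$ and the bracketed factor is bounded below by a positive absolute constant, so a crude numerical estimate shows the right-hand side comfortably exceeds $1/16$, which is exactly the claimed (deliberately loose) bound.

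There is essentially no analytic obstacle here; the only thing to get right is the bookkeeping about which quantities are per-observation and which are $n$-fold products --- in particular that the ``$\alpha$'' in the sample-size hypothesis is the single-observation KL, so that the factor $n$ is genuinely absorbed through tensorization --- together with the harmless relabeling between the $M$ indexed distributions and the reference distribution $P_0$. Once those identifications are fixed, the statement is immediate from Lemma~\ref{lem:tsy}, with no new idea needed.
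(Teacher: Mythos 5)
Your proof is correct and matches the paper's approach exactly: the paper's only "proof" is the single sentence preceding the two corollaries, instructing the reader to set $\theta(P_j)=j$, take the 0-1 loss, and pass to $n$-fold product measures, which is precisely the dictionary you spell out. You add the missing bookkeeping (tensorization of KL, bounding the average KL by the maximum, $s=\tfrac12$, and the numerical check that $\tfrac{\sqrt M}{1+\sqrt M}\bigl(1-\tfrac18-\tfrac{1}{8\log M}\bigr)\ge\tfrac1{16}$ for $M\ge 2$), all of which are the routine details the paper omits.
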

The model in the context of structure learning is the underlying graph $G$.

\section{Experiment details}\label{app:expt}
We describe the experiment details of Section~\ref{sec:expt} and provide additional results in this appendix.

\paragraph{Graph generation}
For our experiments, we simulate
poly-forests by initializing an empty adjacency matrix, then randomly ordering the nodes. For each node along the ordering (except the first), an edge is added from a random preceding node in the ordering with probability 80\%, ensuring acyclicity and forming a directed forest.

\paragraph{Gaussian distribution}

We simulate random Gaussian poly-forests according to the following structural equation model:
\begin{align*}
    X_k = \beta_k \times X_{\pa(k)} + \eta_k\,, \quad \eta_k\sim \mathcal{N}(0,\sigma_k^2),
\end{align*} 
where the coefficients are sampled as $\beta_k\sim Unif([-0.5,-0.1]\cup [0.1,0.5])$, and the noise variances are fixed at $\sigma^2_k\equiv1$.
Under the Gaussian assumption, we test conditional independence using partial correlations \eqref{eq:gauss:test}. We set the cutoff to be 0.05.

\paragraph{Bernoulli distribution}
For the synthetic Bernoulli data, root nodes are sampled independently from a $Bern(0.5)$.
For each non-root node $X_k$, its conditional distribution given its parents $X_{\pa(k)}$ 
is given as follows. Let $b_k \sim Unif(l,u)$ and $R_k\sim Unif\{-1,1\}$, and define the conditional probabilities by:
\begin{align*}
    & X_k \given X_{\pa(k)} = 1 \sim Bern(0.5 + R_k\times b_k) \\
    & X_k \given X_{\pa(k)} = 0 \sim Bern(0.5 - R_k\times b_k) \,.
\end{align*}
This construction introduces parent-dependent dependence while ensuring the conditional probabilities remain within a valid range. 
We set $l=0.3,u=0.48$ in our experiments.
In this Bernoulli case, we employ the test \eqref{eq:bern:test} with the cutoff being 0.05.

\paragraph{Nonparametric continuous distribution}
To generate synthetic nonparametric continuous data, root nodes are drawn from Uniform distribution $U(0,1)$. Each subsequent node $X_k$ is generated as a weighted sum of nonlinear transformation of the parent and an individual uniform noise:
\begin{align*}
    X_k  =  f_{k}(X_{\pa(k}) \times \frac{3}{10} + U_k(0,1) \times \frac{7}{10}
\end{align*}
where $U_k(0,1)\sim Unif(0,1)$.
The transformation functions $f_{k}(z)$ are chosen uniformly at random for each parent-child link from a predefined set below, including functions with both range and domain being $[0,1]$. This process introduces nonparametric dependencies.
\begin{align*}
    f_{k}(z) \sim Unif\bigg\{0.5\times\Big[\sin(2\pi z)+1\Big], z^2, \frac{\log(1+z)}{\log 2},  0.5\times\Big[\cos(2\pi z)+1\Big]\bigg\}
\end{align*}

For continuous data, we first apply a discretization procedure to convert real-valued observations into categorical representations suitable for contingency-table-based analysis.
Following the suggestion of Theorem~5.6 in \citet{neykov2021minimax}, we 
partition the range into a number of bins determined by the smoothness parameter and the sample size. For variables $(X, Y)$, we use $n^{2/(5s+2)}$ number of bins. For variable $Z$, we use $n^{2s / (5s+2)}$ number of bins. We set $s=1$ in this experiment.
We assign each observation to a discrete bin in a two- or three-way contingency table, respectively. 

Once discretized, we apply the U-statistic Conditional Independence (UCI) test \citep{kim2024conditional} to assess whether $X \indep Y \given Z$. The test computes a U-statistic within each stratum defined by a unique bin of the conditioning variable $Z$, and aggregates the results across strata.
We apply weighted U-statistic in our empirical experiments. 
Since UCI is a permutation test, we set the number of permutation to be 199, following the code provided in \url{https://github.com/ilmunk/UCI}.
We again set the cutoff to be 0.05.

\paragraph{Evaluation}
For each experiment setup, 
we report the average (over 50 random replications) Structural Hamming Distance (SHD) between the ground truth and our estimated graph skeleton in Figure~\ref{fig:shd-comparison}, 
and Precise Recovery Rate (PRR) in Figure~\ref{fig:prr-comparison}. PRR measures the relative percentage of exact recovery of the true graph structure. Our experimental results demonstrate the robust performance of the \texttt{PC-tree} algorithm across various data distributions. As illustrated in the provided subplots for Gaussian, Bernoulli, and Nonparametric synthetic data, the SHD/PRR consistently converges towards 0/100\% with increasing sample size. Our empirical results support the theoretical guarantees of the \texttt{PC-tree} algorithm. 

\paragraph{Experiment setting}
We consider number of nodes $d=[20, 40, 60, 80, 100]$.
To demonstrate the convergence of SHD toward zero, we vary the sample size $n$ from 300 to 3000 across all data types (Bernoulli, Gaussian, and nonparametric) in Figure~\ref{fig:shd-comparison}, enabling a consistent evaluation of structural accuracy as sample size increases. 
To examine convergence behavior in PRR plots, we vary the sample size according to the data type: for nonparametric data, sample sizes range from 700 to 2000; for Bernoulli data, from 3000 to 7000; and for Gaussian data, from 1000 to 10000. 
The difference is for better presentation and comes from the signal contained in each distribution setup.

\paragraph{Additional experiments}
We conduct experiments to empirically certify the derived theoretical optimality. To achieve this, we consider two exercises:
\begin{itemize}
    \item \emph{Comparison with baselines}: We incorporate two baseline structure learning methods (GES and Chow-Liu algorithm) under the same experimental setup. We consider the nonparametric case with $d=60$ below, evaluated using SHD (standard error in parenthesis). Overall, we observe that PC-tree indeed achieves competitive or superior performance, consistent with our theoretical result that PC-tree with a optimal CI test is minimax optimal. 

    \begin{table}[h]
    \centering
    \begin{tabular}{c|ccccc}
    \hline
    $n$ (Sample size) & 700 & 1300 & 2000 & 2500 & 3000 \\
    \hline
    PC-tree   & 12.4 (3.82) & 0.95 (0.92) & 0.05 (0.21) & 0.00 (0.00) & 0.00 (0.00) \\
    GES       & 20.6 (3.39) & 9.60 (3.33) & 6.00 (2.12) & 5.50 (2.74) & 4.00 (2.23) \\
    Chow-Liu  & 11.8 (2.96) & 1.30 (1.31) & 0.10 (0.44) & 0.00 (0.00) & 0.00 (0.00) \\
    \hline
    \end{tabular}
    \end{table}
    
    \item \emph{Scaling with $d$}: We verify if the sample complexity derived for PC-tree is tight by conducting an empirical evaluation for the nonparametric case.
    Specifically, we generate synthetic data with sample size $n$ proportional to $\log d$, then evaluate the Precise Recovery Rate (PRR). We present the result of PRR vs. $n / \log d$ in the following table for various $d$, where rows correspond to different choices of $d$ and columns correspond to values of $n/\log d$. The observed curves for different dimensions shows a strong alignment with each other, implies that the empirical behavior of PC-tree well coincides with the theoretical prediction.

    \begin{table}[h]
    \centering
    \begin{tabular}{c|cccccc}
    \hline
    $d \, \Big\backslash \, \frac{n}{\log d}$ & 300 & 350 & 400 & 450 & 500 & 550 \\
    \hline
    20  & 21.3\% & 43.6\% & 74.3\% & 88.2\% & 86.9\% & 95.1\% \\
    40  & 18.0\% & 42.8\% & 63.3\% & 78.0\% & 91.3\% & 97.4\% \\
    60  & 20.4\% & 37.9\% & 67.7\% & 84.9\% & 86.3\% & 92.2\% \\
    80  & 16.2\% & 36.5\% & 63.8\% & 84.5\% & 90.2\% & 97.9\% \\
    100 & 28.6\% & 32.9\% & 71.0\% & 89.3\% & 95.0\% & 96.4\% \\
    \hline
    \end{tabular}
    \end{table}

\end{itemize}

\paragraph{Compute resources}
All experiments were conduced on an Intel Core i7-12800H 2.40GHz CPU.

\bibliographystyle{abbrvnat} 
\bibliography{foo}   

\end{document}